\newcommand{\arxiv}[2][]{\ifthenelse{\equal{#1}{}}
{\href{http://arxiv.org/abs/#2}{\tt arXiv:#2}}
{\href{http://arxiv.org/abs/math/#2}{\tt arXiv:math.#1/#2}}}
\theoremstyle{plain}
\newtheorem{maintheorem}{Theorem}
\newtheorem{theorem}{Theorem}[section]
\newtheorem{lemma}[theorem]{Lemma}
\newtheorem{corollary}[theorem]{Corollary}
\newtheorem*{corollary*}{Corollary}
\newtheorem{proposition}[theorem]{Proposition}
\newtheorem{problem}[theorem]{Problem}
\theoremstyle{definition}
\newtheorem{example}[theorem]{Example}
\newtheoremstyle{remark}
{}{}{}{}{\itshape}{}{ }{\thmname{#1}\thmnumber{ \itshape #2.}}
\theoremstyle{remark}
\newtheorem{remark}[theorem]{Remark}
\def\N{\mathbb{N}} 
\def\R{\mathbb{R}} 
\def\Z{\mathbb{Z}} 
\def\F{\mathcal{F}} 
\def\G{\mathcal{G}} 
\def\C{\mathcal{C}}
\def\K{\mathcal{K}}
\def\M{\mathcal{M}}
\def\NN{\mathcal{N}}
\def\U{\mathcal{U}}
\def\V{\mathcal{V}}
\def\pro{\mathsf{pro\text{-}}}
\def\ind{\mathsf{ind\text{-}}}
\def\inv{\mathsf{inv\text{-}}}
\def\dir{\mathsf{dir\text{-}}}
\def\Top{\mathsf{Top}}
\def\Ho{\mathsf{Ho}}
\def\Sh{\mathsf{Sh}}
\def\Ssh{\mathsf{Ssh}}
\def\Fish{\mathsf{Fish}}
\def\Ash{\mathsf{Ash}}
\def\Ap{\mathsf{Ap}}
\def\Gap{\mathsf{Gap}}
\def\x{\times}
\def\but{\setminus} 
\def\eps{\varepsilon} 
\def\phi{\varphi} 
\def\To{\Rightarrow}
\def\xr#1{\xrightarrow{#1}} 
 \renewcommand{\:}{\colon}
\DeclareMathOperator*{\colim}{colim}
\DeclareMathOperator{\Int}{Int} \DeclareMathOperator{\id}{id}
 \DeclareMathOperator{\Cl}{Cl}
\newcommand{\cev}[1]{\accentset{\leftarrow}{#1}}
\renewcommand{\vec}[1]{\accentset{\rightarrow}{#1}}
\def\tph#1{\raise2.5pt\hbox{\the\textfont1\char"7F}\!\!#1}
\def\tpm#1{\raise0pt\hbox{\the\textfont1\char"7F}\!#1}
\def\tpl#1{\lower1.5pt\hbox{\the\textfont1\char"7F}\!#1}
\def\bydef{\mathrel{\mathop:}=}
\DeclareSymbolFont{bskadd}{U}{bskma}{m}{n}
\DeclareFontFamily{U}{bskma}{\skewchar\font130 }
\DeclareFontShape{U}{bskma}{m}{n}{<->bskma10}{}
\DeclareMathSymbol{\varlrttriangle}{\mathord}{bskadd}    {"E4}
\newcommand*\nullseq{\mathop{\mathpalette\@biguoperator{\bigsqcup}}}
\newcommand*\@biguoperator[2]{\ooalign{\hidewidth$#1$\raisebox{1pt}{\scriptsize$\star$}\hidewidth\cr$#1#2$\cr}}
\begin{document}

\title{Fine shape. I}
\author{Sergey A. Melikhov}
\address{Steklov Mathematical Institute of Russian Academy of Sciences,
ul.\ Gubkina 8, Moscow, 119991 Russia}
\email{melikhov@mi-ras.ru}

\begin{abstract} 
We introduce and develop fine shape, which has a very simple definition and aims to supersede 
all previously known shape theories for metrizable spaces.

The problem with known shape theories of metrizable spaces is illustrated by the following bizarre 
situation.
\v Cech cohomology is an invariant of {\it shape}, and a fortiori of {\it strong shape}.
But Steenrod--Sitnikov homology is not shape invariant (already for compacta) and has not been proved 
to be strong shape invariant (in more than 40 years).
Worse yet, there is an ordinary homology theory that is strong shape invariant by design, but it cannot be
computed in ZFC for simplest non-compact non-ANRs (such as the disjoint union of countably many copies 
of the one-point compactification of the countable discrete space).
On the other hand, Steenrod--Sitnikov homology is an invariant of {\it antishape} 
(=compactly generated strong shape), and a fortiori of {\it strong antishape}.
However, \v Cech cohomology is not antishape invariant (already for ANRs) and has not been proved
to be strong antishape invariant.
And there is an ordinary cohomology theory that is strong antishape invariant by design, but it cannot be
computed in ZFC for simplest non-compact non-ANRs.

Even though strong shape and strong antishape differ from each other by exchanging direct and inverse limits, 
we show that their natural ``corrections'' (taking into account a topology on the indexing sets) coincide 
for all metrizable spaces.
This common ``correction'', called fine shape, is much simpler than the original theories and has both 
\v Cech cohomology and Steenrod--Sitnikov homology as its invariants.
For ANRs fine shape coincides with homotopy, for compacta with strong shape, and for 
locally compact separable metrizable spaces --- with strong antishape.

We prove that a (co)homology theory is fine shape invariant if and only if it satisfies 
the map excision axiom.
\end{abstract}

\maketitle
\section{Introduction}

Classical homotopy theory and classical homotopy invariants: homotopy groups, singular homology and 
cohomology --- work well for spaces that are homotopy equivalent to CW complexes, such as polyhedra%
\footnote{That is, simplicial complexes with the metric topology (see \cite{M00} for a detailed treatment).}
and ANRs (as indicated, in particular, by the Whitehead and Hurewicz theorems).
But they often exhibit highly pathological behavior when applied to more general spaces 
(see, for instance, \cite{M1}*{Theorem 1.1 and Example 5.6}).
Better suited to study more general spaces are ``controlled'' invariants, which agree with classical 
invariants on polyhedra and treat more general spaces by looking at their polyhedral approximations 
(such as nerves of open covers).
Or if you prefer, they are supposed to agree with classical invariants on ANRs and treat more general 
metrizable spaces by looking at their ANR neighborhoods (such as open neighborhoods in a normed vector space).
Of course, this describes ``controlled'' only as an informal concept.
Nevertheless, it is well-known that for metrizable spaces there is just one reasonable ``controlled'' ordinary
cohomology theory --- {\it \v Cech cohomology},%
\footnote{Or equivalently \cite{Sk5} Alexander--Spanier cohomology \cite{Sp} or sheaf cohomology with constant coefficients.}
and just one reasonable ``controlled'' ordinary homology theory --- {\it Steenrod--Sitnikov homology}.%
\footnote{Steenrod--Sitnikov homology is the direct limit of Steenrod homology of compacta \cite{Ste}, \cite{Si}, and the latter
has a number equivalent descriptions: Kolmogorov's, with chains as functions on tuples of subsets \cite{Kol}, \cite{Md}; 
Massey's, with chains defined in terms of cochains, which are finitely-valued functions on tuples of points \cite{Mas} 
(see also \cite{Sk2}); Milnor's, with cochains defined in terms of singular chains of function spaces \cite{Mi1}*{\S4} 
(see also \cite{Sk2}); Deheuvels' in terms of cosheaves \cite{Deh1}, \cite{Deh2}; and, in the case of finitely generated 
coefficients, also the one by Borel and Moore in terms of sheaves \cite{BM}, \cite{Sk69}.}
This view is supported, in particular, by the uniqueness theorems of Milnor--Petkova \cite{Mi1}, \cite{Pe1}*{Theorem 7} 
(see also \cite{M00}*{Theorem \ref{book:uniqueness2}}) and Bacon \cite{Bac2} and by the Sitnikov duality 
(see \cite{M00}*{Theorem \ref{book:alex duality}}).

``Controlled'' homotopy theories are called {\it shape theories}.
It is well known that for compacta (=compact metrizable spaces) there is just one reasonable shape theory, known 
as {\it Steenrod homotopy} or {\it strong shape}.
It was constructed in a Princeton dissertation by D. Christie, which was written under the supervision 
of Lefschetz and published in Transactions of the AMS in 1944 \cite{Ch}.
Yet it remained unnoticed until mid-70s, when it was rediscovered as a ``corrected'' version of Borsuk's 
{\it shape} \cite{Bo1} by several groups of authors, and reformulated in a variety of equivalent approaches 
\cite{Qu}, \cite{Gr}, \cite{EH}, \cite{Ba1}, \cite{Ba2}, \cite{DS1}, \cite{KO}, \cite{KO2}, \cite{Ko}, 
\cite{Fe}*{\S5}; see also \cite{EH2} (but beware of errors in \S2), \cite{Dy}, \cite{DS2}, \cite{Ca}.
One way to define the strong shape category for compacta \cite{EH}, \cite{KO}, \cite{DS2}, closely 
related to Chapman's characterization of shape, is that a strong shape morphism between compacta $X$ and $Y$,
embedded as Z-sets in the Hilbert cube $I^\infty$, is a proper homotopy class of proper maps 
$I^\infty\but X\to I^\infty\but Y$. (This turns out to be independent of the embeddings.)
A detailed treatment of strong shape of compacta can be found in \cite{M1}, which contains simplified proofs
of a number of previous results, as well as a further development of the area, including first correct 
proofs of the Whitehead-type and Hurewicz-type theorems in terms of Steenrod homotopy groups and 
Steenrod homology groups.

The story for non-compact spaces used to be much more complicated.
Quite a few different shape theories of non-compact spaces have been constructed in the last 50 years ---
but none of them was entirely satisfactory.
These will be discussed in some detail below, but roughly speaking, at first there were theories (such as 
Fox's shape and compactly generated shape) which are obviously ``defective'' as measured by $\lim^1$.
Then there were other theories (such as strong shape) which lack those obvious ``defects'', and coincide
with Steenrod homotopy for compacta, but become very complex and intractable for non-compact spaces, both 
in practical terms and in terms of what they are able to say without adding new axioms to ZFC.
Besides, most of these shape theories failed (or were never proved) to have both Steenrod--Sitnikov 
homology and \v Cech cohomology as their invariants.

In the present paper we introduce a new shape theory of metrizable spaces, which we call ``fine shape''.%
\footnote{Y. Kodama and his students have used ``fine shape'' to call what is now better known as strong shape of 
compacta \cite{KO}.
As far as I know, they have only applied this terminology in the compact case, so it appears to be free in 
the non-compact case. 
Also, Kodama's geometric approach to strong shape appears to be rather close in spirit to the approach of 
the present paper.}
We will see that it is not only free of the deficiencies of the previous theories, but also easier to define.
The results of the present paper and its sequels%
\footnote{In particular, part II proves an abelian Whitehead-type theorem for local compacta (=locally compact 
separable metrizable spaces), and part III proves that every Polish space is fine shape equivalent to the limit 
of an inverse sequence of simplicial maps between metric simplicial complexes.}
suggest that it is just that one reasonable shape theory which was no longer hoped to exist.

\subsection{Fine shape}
There are several ways to define the fine shape category, but one of them is particularly simple.

Given metrizable spaces $X$ and $Y$, let $M$ and $N$ be absolute retracts containing them as closed Z-sets
(we recall in \S\ref{z-sets} what this means and why they exist).
A map $M\but X\to N\but Y$ is called {\it $X$-$Y$-approaching} if it sends every sequence of points which has 
a cluster point in $X$ into a sequence which has a cluster point in $Y$.
Then {\it fine shape classes} between $X$ and $Y$ are the classes of $X$-$Y$-approaching maps $M\but X\to N\but Y$
up to $X$-$Y$-approaching homotopies (that is, homotopies that are $(X\x I)$-$Y$-approaching as maps 
$(M\but X)\x I\to N\but Y$).
It turns out that these classes do not depend on the choice of $M$ and $N$ (see \S\ref{fsm-section}).
The {\it fine shape category} has metrizable spaces as objects and their fine shape classes as morphisms.

\begin{example}\label{approaching example}
If $F\:M\to N$ is a continuous map such that $F^{-1}(Y)=X$, then its restriction $f\:M\but X\to N\but Y$ is 
easily seen to be an $X$-$Y$-approaching map.
\end{example}

\begin{example} When $M$ and $N$ are compact, a map $M\but X\to N\but Y$ is $X$-$Y$-approaching 
if and only if it is proper.
\end{example}

\begin{remark} F. Ancel pointed out a relation of $X$-$Y$-approaching maps with strongly continuous relations 
in the sense of J. Cannon \cite{Can}.
See Remark \ref{relations} for the details.
\end{remark}

To get a better understanding of fine shape it helps to look at its other definitions.

\begin{maintheorem}\label{3-defs}
For a map $f\:M\but X\to N\but Y$ the following are equivalent:
\begin{enumerate}
\item $f$ is an $X$-$Y$-approaching map;
\item if $U\subset N\but Y$ is such that $U\cup Y$ is an open neighborhood of $Y$ in $N$,
then $f^{-1}(U)\cup X$ is an open neighborhood of $X$ in $M$;
\item if $K\subset M\but X$ has compact closure in $M$, then $f(K)$ has compact closure in $N$
and $f|_K\:K\to f(K)$ is a proper map.
\end{enumerate}
\end{maintheorem}
The equivalence of (2) and (3) looks very surprising: why would a condition on open neighborhoods
relate at all to a condition on compacta?
While the proof is not difficult (see Proposition \ref{map duality1}), this appears to be 
a totally new kind of duality.
As discussed in \S\ref{filtrations}, it is present already on the level of spaces (without any maps coming into play); 
namely, the families $\{K\subset X\but M\mid K\text{ has compact closure in $M$}\}$ and 
$\{U\subset M\but X\mid U\cup X\text{ is an open neighborhood of $X$ in $M$}\}$ of subsets of $M\but X$ determine each other.
This duality also turns out to provide a new approach to axiomatic homology and cohomology \cite{M-II}.

\begin{maintheorem} \label{main2a}
(a) Fine shape is weaker than homotopy (this is trivial, see \S\ref{fsm-section}).

(b) For ANRs fine shape coincides with homotopy (Theorem \ref{ANR}).

(c) For compacta fine shape coincides with strong shape (Proposition \ref{compact-ss}).
\end{maintheorem}

In more detail, (a) asserts that there is a functor from the homotopy category of metrizable spaces
to the fine shape category which is the identity on objects.%
\footnote{In other words, every map $X\to Y$ determines a fine shape morphism $X\to Y$ in a way that is 
compatible with composition and the identity and so that homotopic maps yield the same fine shape morphism.}
A consequence of this is that homotopy equivalent metrizable spaces are fine shape equivalent.

\begin{maintheorem}\label{main5}
A (co)homology theory on closed pairs of metrizable spaces satisfies the map excision axiom 
if and only if it is fine shape invariant.
\end{maintheorem}

See \S\ref{mex-section} for the statement of the map excision axiom and for the precise meaning of
``fine shape invariance''.
The compact case of Theorem \ref{main5} is due to Mrozik \cite{Mr2}.

Let us note that fine shape invariance is a strong form of homotopy invariance; 
but Theorem \ref{main5} says that it is also a strong form of the excision axiom.

\begin{corollary} \label{homology0} 
Steenrod--Sitnikov homology and \v Cech cohomology are invariants of fine shape.
\end{corollary} 

\begin{example} (a) The universal cover $\tilde X$ of $X\bydef S^n\vee S^1$, $n>1$, is not fine shape equivalent 
to any compactum $K$.
Indeed, $H^n(\tilde X)\simeq\prod_{i=1}^\infty\Z$, but $H^n(K)$ is countable, since it is isomorphic to 
$\colim_{i\in\N} H^n(Q_i)$, where $Q_i$ are compact polyhedra.

(b) The metric wedge $X\bydef \bigvee_{i=1}^\infty S^n$ (also known as the $n$-dimensional Hawaiian Earring) 
is not fine shape equivalent to any separable ANR $P$.
Indeed, $H_n(X)\simeq\prod_{i=1}^\infty\Z$, but $H_n(P)$ is countable since $P$ is homotopy equivalent
to a separable polyhedron.
\end{example}

The proof of Theorem \ref{main5} employs the following result, which appears to be 
new even in the case where $X$ and $Y$ are compacta.

\begin{maintheorem} \label{main6}
Let $M$ and $N$ be metrizable spaces and let $X$ and $Y$ be their closed 
homotopy negligible subsets.
If $M\but X$ and $N\but Y$ are $X$-$Y$-approaching homotopy equivalent, then $X$ and $Y$ 
are fine shape equivalent.
\end{maintheorem}

Of course, when $M$ and $N$ are absolute retracts, this follows from the definition of fine shape.
The point is that $M$ and $N$ can be arbitrary metrizable spaces (see \S\ref{approaching cat}).
This provides a convenient tool for constructing fine shape equivalences:

\begin{corollary} \label{glass+cone} Let $A$ be a closed subset of a metrizable space $X$.

(a) If $A$ is compact, $X/A$ is fine shape equivalent to $X\cup_A CA$, where $CA$ is the cone. 

(b) $(X\x I)\but Q$ is fine shape equivalent to $(X\x\{0\}\cup A\x I)\but Q$ for any $Q\subset\Int A\x I$.
\end{corollary}

When $X$ is compact, both (a) and (b) are well-known (see respectively \cite{M00}*{Proposition \ref{book:ssh-cone}} 
and references there and \cite{Ca}*{Corollary 1.6}).

We will prove a generalization of (a) involving a mapping cylinder (Lemma \ref{fish-MC}), which in turn will be 
a key step in the proof of the ``if'' part of Theorem \ref{main5}.

\begin{proof}[Proof. (b)]
Let $U_\eps$ be the closed $\eps$-neighborhood of $A$ in $X$. 
For each $\eps>0$ clearly $X\x I$ deformation retracts onto $X\x\{0\}\cup V_\eps$, 
where $V_\eps=\bigcup_{t\in I} U_{\eps(1-t)}\x\{t\}$.
Moreover, this deformation retraction continuously depends on $\eps$, and so yields a deformation retraction $r_t$ of
$X\x I\x(0,1]$ onto $X\x\{0\}\x(0,1]\cup\bigcup_{\eps\in(0,1]}V_\eps\x\{\eps\}$.
Clearly $r_1$ is an $(X\x I)$-$(X\x\{0\}\cup A\x I)$-approaching map and $r_t$ is 
an $(X\x I)$-$(X\x I)$-approaching homotopy.
It remains to remove $Q\x(0,1]$ and apply Theorem \ref{main6}.
\end{proof}

\begin{proof}[(a)]
First we note that $Y\bydef X\x I\big/A\x\{1\}$ deformation retracts onto a copy of $X/A$ (namely, onto $X\x\{1\}\big/A\x\{1\}$)
and at the same time contains a copy of $X\cup_A CA$ (namely, $Z\bydef X\x\{0\}\cup A\x I\big/A\x\{1\}$).
The deformation retraction $r_t$ constructed in the proof of (b) descends to a deformation retraction $q_t$ of $Y\x(0,1]$ onto 
$X\x\{0\}\x(0,1]\cup\bigcup_{\eps\in(0,1]}V_\eps\x\{\eps\}\big/A\x\{1\}\x(0,1]$.
Since $A$ is compact, $q_1$ is a $Y$-$Z$-approaching map and $q_t$ is a $Y$-$Y$-approaching homotopy.
It remains to apply Theorem \ref{main6}.
\end{proof}

\begin{example} When $A$ is non-compact, the metric quotient $X/A$ (see \cite{M00}*{\S\ref{book:metric quotient}}) need not be 
fine shape equivalent to $X\cup_A CA$, where $CA$ is the metric cone (see \cite{M00}*{\S\ref{book:metric cone}}).%
\footnote{Incidentally, let us note that $CA$ does not depend on the choice of the metric on $X$, but $X/A$ does.}
Indeed, let $X=\{(n,\frac1n)\mid n\in\N\}\cup [1,\infty)\subset\R^2$ with the Euclidean metric and let $A=[1,\infty)$.
Then the metric quotient $X/A$ is homeomorphic to $\{\frac1n\mid n\in\N\}\cup\{0\}$.
However, $H^0(X/A)$ is countable since $X/A$ is compact, whereas $H^0(X\cup_A CA)$ is uncountable, since $X\cup_A CA$ 
contains infinitely many connected components.
\end{example}

As a byproduct of the proof of the ``only if'' part of Theorem \ref{main5} we obtain the following
unexpected result (Theorem \ref{mex-supports}):

\begin{maintheorem} \label{homology-theories}
Let $(X,A)$ and $(Y,B)$ be closed pairs of metrizable spaces and suppose that
$h\:X\but A\to Y\but B$ is a homeomorphism such that $h$ is an $A$-$B$-approaching map
and $h^{-1}$ is a $B$-$A$-approaching map.
If $H$ is a (co)homology theory on closed pairs of metrizable spaces satisfying 
the map excision axiom, then $H_*(X,A)\simeq H_*(Y,B)$ (resp.\ $H^*(X,A)\simeq H^*(Y,B)$).
\end{maintheorem}

This suggests an entirely new approach to homology theories, which will hopefully be developed in 
a subsequent paper by the author.

\subsection{Shape, antishape and UCOSS}

By Theorem \ref{3-defs} the following three naturally available approaches to fine shape: 
\begin{enumerate}
\item[($\nu$)] one based on neighborhoods, 
\item[($\kappa$)] one based on compacta and 
\item[($\kappa\nu$)] one based simultaneously on neighborhoods and on compacta 
\end{enumerate}
all yield the same result.
Borsuk \cite{Bor} has observed that in his original setting (based on countable ``fundamental sequences'') 
the corresponding three shape theories all differ from each other on metrizable spaces, even 
though they all coincide on compacta.
This observation of Borsuk dates from 1972, when, in his own words, ``the attempts to extend [the theory of shape 
of compacta] onto arbitrary metrizable spaces [we]re only at the beginning'' \cite{Bor}.
However, subsequent literature on the subject only aggravated the situation by producing still more of
inequivalent definitions (see e.g.\ \cite{San}).

Starting from the early 70s, the basic dichotomy was between theories of type ($\nu$), exemplified by {\it shape} 
(see \cite{Fox1}, \cite{DS}, \cite{MS}, \cite{M00}*{\S\ref{book:shape}}) and theories of type ($\kappa$), exemplified by
{\it compactly generated shape} (see \cite{SS}, \cite{WD} and references there).
It should be noted that both shape and compactly generated shape are obviously defective shape theories,  
because Steenrod--Sitnikov homology is not an invariant of either of them (already for compacta) and
their restrictions to compacta differ from strong shape.

Shape and compactly generated shape are not dual to each other from the viewpoint of the duality between 
inverse and direct limits. 
Indeed, the morphisms of compactly generated shape may be identified with those of $\ind(\pro\C)$, where 
$\C$ is the homotopy category of compact polyhedra (see e.g.\ \cite{M00}*{\S\ref{book:pro} and \S\ref{book:ind}} 
concerning the $\pro$ and $\ind$categories).
The morphisms of the dual theory, which might be termed ``shape based on compactly generated homotopy'', 
are those of $\pro(\ind\C)$.
Here ``compactly generated homotopy'' (called ``weak homotopy'' by Adams \cite{Ad} and ``finite-homotopy'' 
in \cite{Mal}) considers two maps between polyhedra to be equivalent if their restrictions to every compact 
subset%
\footnote{Or equivalently to every finite subcomplex of a fixed triangulation 
(see \cite{M00}*{Theorem \ref{book:poly-cofinal}}).}
are homotopic.
The difference between homotopy and compactly generated homotopy is captured by $\lim^1$ in cohomology 
of non-compact polyhedra (see e.g.\ \cite{M00}*{Example \ref{book:phantom}}), and so is the difference
between shape and shape based on compactly generated homotopy.

Dual to shape is then compactly generated {\it strong} shape, which we will call {\it antishape} for brevity%
\footnote{It would be more natural to call it ``coshape'' but this word already has strong connotations with 
a less remarkable sort of theories, related to weak homotopy type.
One could also call antishape ``Steenrod--Sitnikov homotopy''.}
(see \cite{M00}*{\S\ref{book:antishape}} for a treatment).
The difference between compactly generated shape and antishape is likewise captured by $\lim^1$ in 
Steenrod homology of compacta.
Antishape is an obviously defective shape theory, because \v Cech cohomology is not
its invariant (already for polyhedra) and its restriction to polyhedra differs from homotopy.

\begin{maintheorem} \label{main1a} 
(a) Fine shape is stronger than shape (Theorem \ref{shape}).

(b) Fine shape is stronger than antishape (Theorem \ref{antishape}).
\end{maintheorem}

Theorem \ref{main1a}(a) immediately%
\footnote{To be precise, (1) follows from (4) (see \cite{M00}*{\S\ref{book:cech-section}}), which in turn follows from Theorem \ref{main1a}(a).
And (2) and (3) follow from (5), which in turn follows from Theorem \ref{main1a}(a) (see \cite{M00}*{\S\ref{book:fun-example2}}).
(Actually, there is no much difference between (4) and (5).)
And (6) follows from Theorem \ref{main1a}(a) (see \cite{M00}*{\S\ref{book:quasicomponents}}).}
yields

\begin{corollary} \label{cohomology} The following are invariants of fine shape:
\begin{enumerate}
\item \v Cech cohomology;
\item stable cohomotopy;
\item complex K-theory (as given by a spectrum);
\item more generally, every shape invariant cohomology theory;
\item the homotopy set $[X,Q]$ for a fixed ANR $Q$, regarded as a functor of $X$;
\item for a separable metrizable $X$, the space of quasi-components of $X$.
\end{enumerate}
\end{corollary}

Theorem \ref{main1a}(b) immediately%
\footnote{To be precise, (1), (2) and (3) follow from (4), which in turn follows from Theorem \ref{main1a}(b)
(see \cite{M00}*{Theorem \ref{book:homology-ext}}).
And (5) similarly follows from Theorem \ref{main1a}(b) (see \cite{M00}*{Theorem \ref{book:sit-extension}}).}
yields

\begin{corollary} \label{homology}
The following are invariants of fine shape:
\begin{enumerate}
\item Steenrod--Sitnikov homology;
\item Steenrod--Sitnikov bordism;
\item K-homology of Brown--Douglas--Fillmore;
\item more generally, every homology theory that has compact supports and is strong shape 
invariant on compacta;
\item Steenrod--Sitnikov homotopy groups.
\end{enumerate}
\end{corollary} 

Much (but not all) of Corollaries \ref{cohomology} and \ref{homology} can also be deduced from Theorem \ref{main5}.

\begin{example}
The Baire space $\N^\infty$ is not fine shape equivalent to any separable metrizable space $X$ that is 
a countable union of compacta.
Indeed, the space of quasi-components $Q(\N^\infty)$ is clearly homeomorphic to $\N^\infty$ and hence is not 
a countable union of compact subsets.
On the other hand, since $X$ is a countable union of compacta, $Q(X)$, which is Hausdorff but not necessarily 
metrizable, is a countable union of their images, which are compact.
\end{example}

A remarkable shape theory of type ($\kappa\nu$) was constructed by F. Bauer \cite{Ba0}.
It is a blend of shape and antishape.
More precisely, Bauer also blended them with stable homotopy and called the resulting theory
``compact-open strong shape'', but we are interested in the unstable version of his construction, 
which will thus be abbreviated {\it UCOSS}.
By design, UCOSS is stronger than shape and than antishape.
In particular, both Steenrod--Sitnikov homology and \v Cech cohomology are its invariants.
It can be shown by combining the proofs of parts (a) and (b) of Theorem \ref{main1a} that UCOSS is 
weaker than fine shape.
For polyhedra UCOSS coincides with homotopy, and for compacta with strong shape (see \cite{Ba0}).
Thus shape and antishape do cancel the most obvious ``defects'' of each other in Bauer's theory.
But the fact that they are just lumped together in an ad hoc manner makes the UCOSS theory look
artificial, and sows the suspicion that it might retain some less obvious, hidden ``defects''.
Anyone who considers using it would inevitably wonder if a better solution exists.
(And it does, as we have seen.)

\subsection{Strong shape and strong antishape}

Strong shape of non-compact spaces arises as the most obvious way of ``correcting'' shape by
the familiar method of introducing coherence relations, and due to this it has a number of 
equivalent definitions (see \cite{Mard}, \cite{Ca} and references there).
Unfortunately, all of them are either indirect and non-constructive, or very tedious, as one has to deal
with $n$-dimensional coherence conditions for all $n$ (i.e.\ essentially with higher categories).
Working with higher categories may sound exciting, but in practice it means that either the proof 
of every minor lemma takes several pages (as in the book \cite{Mard} on strong shape), or a lot has 
to be glossed over, with increasing risk of mistakes (as in the erroneous papers on higher categories 
discussed in \cite{Vo}).

The definition of strong shape can be immensely simplified for a class of spaces called coronated ANRs.
A {\it coronated ANR} is a metrizable space that contains a compactum whose complement is an ANR.
Coronated ANRs include not only compacta and ANRs, but also, for instance, complements of 
all locally compact subsets of spheres \cite{M-V}.
It is proved in \cite{M-V} that a metrizable space is a coronated ANR if and only if it 
admits a sequential resolution where the images of all the bonding maps $p^i_j$ are ANRs.
Due to this, strong shape morphisms between coronated ANRs admit a much simpler description, involving 
only the 1-dimensional coherence conditions (see \cite{M00}*{\S\ref{book:seq-ss}} for the details).

\begin{maintheorem} \label{main1b} 
Fine shape is stronger than strong shape (Theorem \ref{strongshape}).
\end{maintheorem}

While this implies Theorem \ref{main1a}(b), we first prove Theorem \ref{main1a}(b)
and then build on that proof to prove Theorem \ref{main1b}.

Just like shape can be ``improved'' to strong shape, antishape can be ``improved'' to {\it strong antishape} by 
introducing coherence conditions.
We carry out this procedure in detail in the case of {\it local compacta} (=locally compact separable metrizable spaces), 
in which only $1$-dimensional coherence conditions are needed (see \S\ref{sash}).
For arbitrary metrizable spaces the definition of strong antishape would be entirely parallel to the (very tedious) 
definition of strong shape in \cite{Mard} and we refrain from spelling out the details.
Once this definition is written down, it should be possible to prove in the spirit of Theorem \ref{main1b}
that fine shape is stronger than strong antishape.
In fact, much of this predicted proof is already present in the proof of Theorem \ref{ash-surj}.

\begin{maintheorem} \label{main2b}
(a) For local compacta fine shape coincides with strong antishape (Theorem \ref{fish-sash}(a)).

(b) For coronated ANRs fine shape is strictly stronger than strong shape (Example \ref{comb-example}).
\end{maintheorem}

Part (a) says, in particular, that fine shape of local compacta admits a purely combinatorial description
(in terms of homotopy theory of countable simplicial complexes, without quantification over uncountable
sets of their subcomplexes).

Part (b) is very surprising, because it says that the dual of (a) fails.
The example is constructed using the fundamental group, and there is a result to the effect that no example 
of this kind can be constructed using homology (see Remark \ref{non-example}).
It is unknown if such examples can be constructed using higher homotopy groups.

\begin{problem} Does fine shape coincide with strong shape for locally connected coronated ANRs?
\end{problem}

\subsection{Fox shape versus Borsuk shape}

One effect of using a definition of high logical complexity, such as that of strong shape, is that it may be 
impossible to answer some of the simplest questions about it not only in practice, but also in ZFC.
It remains unknown whether Steenrod--Sitnikov homology of metrizable spaces is an invariant of strong shape. 
There is a modified version of Steenrod--Sitnikov homology, called {\it strong homology}, which is defined
in the spirit of strong shape and is an invariant of strong shape, but it cannot be computed in ZFC for
simplest non-compact non-ANRs (such as the product $\N\x\N^+$ of the countable discrete space and 
its one-point compactification) \cite{MP}.
In the words of Sklyarenko, the issue of strong shape invariance of Steenrod--Sitnikov homology 
(for paracompact spaces) ``will likely be a test not only for Steenrod--Sitnikov homology, but also for 
strong shape theory itself'' \cite{Sk6}.
The situation for \v Cech cohomology and antishape is entirely symmetric.
There is a modified version of \v Cech cohomology, called {\it strong cohomology} (see \cite{Mard}), which 
must be an invariant of strong antishape by design (once the definition of the latter is written out), 
but it cannot be computed in ZFC for simplest non-compact non-ANRs (see \cite{M-III}*{Example \ref{lim:MP'}}).

The set-theoretic troubles that strong (co)homology brings to light are of course implicit in the very concept of
strong (anti)shape, and stem directly from the set-theoretic treatment of the uncountable 
indexing sets of inverse systems (see \cite{M-IV} and \cite{M-III}*{Introduction}).
It was Fox who first opened the Pandora's box, but actually he was rather cautious about it himself, reverting in 
a later paper from arbitrary inverse systems to inverse sequences:
``In [\cite{Fox1}] I gave a somewhat more general definition (inverse system) but I now believe that that more 
general definition, although perfectly workable, may have been a tactical error on my part. (In topology I know 
of no convincing reason for the general definition, although I have recently heard that in algebraic geometry 
there may be.)'' \cite{Fox2}

Theorem \ref{3-defs} may be regarded as saying that a ``corrected'' form of strong shape miraculously 
coincides with a ``corrected'' form of strong antishape, and they further coincide with fine shape.
Here the ``correction'' of strong antishape would be by taking into account a natural topology on
the indexing set (i.e.\ the poset of compact subsets in the given space), namely, the topology of
the Hausdorff metric (compare \cite{M-IV}).
The ``correction'' of strong shape would be by taking into account an appropriate topology on 
the indexing set (i.e.\ a directed set of polyhedra or ANRs which approximate the given space),
though there is less clarity about the topology in this case (see, however, 
\cite{M-IV}*{proof of Theorem \ref{pos:lerayss2}}).
In both cases one may wonder whether the ``correction'' can be carried out in a more formal
manner, which would replace the homotopy colimits over posets that are implicit in the definitions of 
strong shape and strong antishape by homotopy colimits over pospaces 
(see \cite{M-IV}*{Theorem \ref{pos:atoms}}).

Much of the previous discussion is summarized by the following diagram.
\bigskip

\begin{tikzcd}[row sep=1.5em,column sep=1.5em]
& \text{homotopy} \dar[blue,Rightarrow] & \\
& \text{fine shape} \drar[blue,Rightarrow]{\text{topology on }\kappa}
\dar[blue,Rightarrow] \dlar[',blue,Rightarrow]{\text{topology on }\nu} & \\
\text{strong shape} \dar[',blue,Rightarrow]{\,\lim^p\text{over }\nu\,} & 
\text{UCOSS} \dlar[blue,Rightarrow] \drar[blue,Rightarrow] & 
\text{strong antishape} \dar[blue,Rightarrow]{\,\lim^p\text{over }\kappa} \\
\text{shape} \dar[',blue,Rightarrow]{\,\lim^1\text{over }\kappa\,} 
\ar[dd,magenta,Rightarrow,bend left] &  & \text{antishape = CG strong shape}
\dar[blue,Rightarrow]{\,\lim^1\text{over }\nu} \ar[dd,magenta,Rightarrow,bend right]\\
\text{shape based on CG homotopy} &  & \text{CG shape} \\
\text{\color{magenta}\v Cech cohomology} &  & \text{\color{magenta}Steenrod--Sitnikov homology}
\end{tikzcd}
\bigskip

It should be noted that long after uncountable inverse systems became popular in shape theory,
Borsuk ignored the new fashion and remained faithful to his (countable) fundamental sequences.
Of course, they are even more obviously inadequate, since the directed set of open covers of a non-compact
space almost never contains a countable cofinal sequence (see \cite{M-V}*{Theorem \ref{cor:uc}}).
But on the other hand, Borsuk's fundamental sequences $X\to Y$, where $X$ and $Y$ are closed subsets of 
absolute retracts $M$ and $N$, respectively, are somewhat reminiscent of $X$-$Y$-approaching maps
$M_{[0,\infty)}\to N_{[0,\infty)}$ between the mapping telescopes, where $\dots\subset M_1\subset M_0=M$
and $\dots\subset N_1\subset N_0=N$ are open neighborhoods of $X$ and $Y$ such that $\bigcap_i M_i=X$
and $\bigcap_i N_i=Y$ (compare \S\ref{approaching cat} below).
Indeed, when $X$ and $Y$ are compact, fundamental sequences $X\to Y$ are easily seen to be just an equivalent 
reformulation of $X$-$Y$-approaching maps $M_{[0,\infty)}\to N_{[0,\infty)}$; but in the non-compact case 
the two notions are not equivalent (by considering, for instance, \cite{Bor}*{example on p.~79}).

To summarize, the approach of the present paper can be said to reconcile
\begin{itemize}
\item Borsuk's and Fox's concepts of shape;
\item shape theories of types ($\kappa$), ($\nu$) and ($\kappa\nu$);
\item the adequacy of those shape theories that involve coherence conditions with the simplicity of those that don't.
\end{itemize}

\section{Fine shape} \label{fineshape}

\subsection{Filtered and cofiltered spaces} \label{filtrations}

A poset $P$ is called {\it directed} if any two elements of $P$ have an upper bound; and {\it codirected} 
if any two elements of $P$ have a lower bound.
(This is a special case of filtered and cofiltered categories, which we do not need.)

A space $W$ endowed with a (co)directed family of subsets $\kappa$ (resp.\ $\nu$), ordered by inclusion, 
will be called {\it (co)filtered} and denoted $W_\kappa$ (resp.\ $W^\nu$).

Let $X$ be a closed subset of a space $M$.
If $U$ and $V$ are open neighborhoods of $X$ in $M$, then so is $U\cap V$.
Thus 
\[\nu_M(X)\bydef \{U\but X\mid U\text{ is an open neighborhood of $X$ in $M$}\}\]
is a codirected family of open subsets of $M\but X$.
We will call it the {\it neighborhood cofiltration} and often abbreviate to $\nu X$.
It is easy to see that every open subset of $M\but X$ containing a member of $\nu X$ is itself a member of $\nu X$.

$\nu_M(X)$ contains the same information as the {\it co-neighborhood filtration} 
\[\bar\nu_M(X)\bydef \{F\subset M\but X\mid F\text{ is closed in $M$}\},\]
which is sometimes more convenient to use, and will often be abbreviated as $\bar\nu X$.
It is a directed family of closed subsets of $M\but X$.

On the other hand, if $K$ and $L$ are compact subsets of $M$, then so is $K\cup L$.
Thus
\[\kappa_M(X)\bydef \{K\but X\mid K\text{ is a compact subset of $M$}\}\]
is a directed family of closed subsets of $M\but X$.
We will call it the {\it compact filtration} and often abbreviate to $\kappa X$.
It is easy to see that every closed subset of $M\but X$, contained in a member $\kappa X$, is itself
a member of $\kappa X$.
Each member of $\kappa X$ is locally compact (since it is an open subset of a compact space).

\begin{example} (a) If $X=\emptyset$, then $\kappa X$ consists of all compact sets and $\bar\nu X$ 
of all closed sets.

(b) If $X=pt$ and $M$ is compact (i.e.\ it is the one-point compactification of $M\but X$), then 
$\kappa X$ consists of all closed sets and $\bar\nu X$ of all compact sets.
\end{example}

\begin{proposition} \label{duality}
Let $X$ be a closed subset of a metrizable space $M$ and let $S$ be a closed subset of $M\but X$.
Then

(a) $S\in\bar\nu X$ if and only if $S$ meets every member of $\kappa X$ in a compact set.

(b) $S\in\kappa X$ if and only if $S$ meets every member of $\bar\nu X$ in a compact set.
\end{proposition}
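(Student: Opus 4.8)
The plan is to settle the two ``only if'' directions by the elementary fact that a closed subset of a compactum is compact, and to prove the two ``if'' directions by contradiction, in each case manufacturing a suitable test set on which $S$ fails to be met in a compact set. Throughout I would use that $S$, being closed in $M\but X$, satisfies $\Cl_M(S)\cap(M\but X)=S$, so that $\Cl_M(S)\but S\i X$ and $\Cl_M(S)\but X=S$.

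For the forward implications: in (a), if $S\in\bar\nu X$ then $S$ is closed in $M$, and for any compact $K\i M$ the set $S\cap(K\but X)=S\cap K$ (equality because $S\i M\but X$) is closed in the compactum $K$, hence compact. In (b), if $S=K\but X$ with $K$ compact, then for any $F\in\bar\nu X$ one has $S\cap F=K\cap F$ (because $F\cap X=\emptyset$), again closed in $K$ and so compact. Both are routine.

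The content lies in the converses, which I would argue using that in a metric space compactness coincides with sequential compactness. For (a), $S\notin\bar\nu X$ means exactly that there is a point $x\in\Cl_M(S)\but S$, necessarily lying in $X$. Choosing $s_n\in S$ with $s_n\to x$, the set $K=\{s_n\mid n\in\N\}\cup\{x\}$ is compact, so $K\but X\in\kappa X$; but $S\cap(K\but X)=\{s_n\mid n\in\N\}$ has $x$ as a limit point outside itself and is therefore not compact, contradicting the hypothesis. Hence $S\in\bar\nu X$.

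For (b) I would set $K:=\Cl_M(S)$, so that $K\but X=S$ and it remains only to show $K$ compact, whence $S=K\but X\in\kappa X$. Suppose $K$ is not compact, hence not sequentially compact; pick a sequence in $K$ with no convergent subsequence and, using density of $S$ in $K$, perturb it to obtain a sequence $(s_n)$ in $S$ still having no subsequence convergent in $M$. The step I expect to require the most care is to verify that the value set $A=\{s_n\mid n\in\N\}$ is then closed and discrete in $M$: a limit point of $A$ would provide a sequence of distinct points of $A$, hence a convergent subsequence of $(s_n)$, a contradiction. Thus $A$ is an infinite, closed, non-compact subset of $M\but X$, so $A\in\bar\nu X$ while $S\cap A=A$ is not compact, again contradicting the hypothesis. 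Therefore $K$ is compact, which finishes the proof.
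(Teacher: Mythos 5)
Your proof is correct and follows essentially the same route as the paper's own: the ``only if'' parts via closed subsets of a compactum, the converse of (a) via a sequence in $S$ converging to a point of $X$ (whose trace in $M\but X$ is a non-compact member intersection), and the converse of (b) via perturbing a non-clustering sequence in $\Cl_M(S)$ into $S$ to produce a closed, discrete, non-compact set lying in $\bar\nu X$ --- exactly the paper's diagonal sequence $x_{ii}$. The only micro-detail to make explicit is that your value set $A$ is infinite (a finite value set would yield a constant, hence convergent, subsequence), after which the contradiction is complete.
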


Thus $\kappa X$ and $\nu X$ in fact determine each other.

\begin{proof} First let us note that if $F$ is a closed subset of $M$, disjoint from $X$, and $K$ is 
a compact subset of $M$, then $F\cap K$ is a closed subset of $K$, and so must be compact.
Also it is disjoint from $X$, and so coincides with $F\cap (K\but X)$.
This implies the ``only if'' assertions in (a) and (b).

Suppose that $S$ is not a member of $\bar\nu X$.
Then $S$ is not closed in $M$.
Since $S$ is closed in $M\but X$, some sequence of points $x_i\in S$ converges to some $x\in X$.
Let $K=\{x_i\mid i\in\N\}$.
Then $K\cup\{x\}$ is compact, so $K\in\kappa X$.
Also $K$ is non-compact and $K\subset S$.
Thus $S$ meets the member $K$ of $\kappa X$ in the non-compact set $K$. 

Suppose that $S$ is not an element of $\kappa X$.
Then the closure $\bar S$ of $S$ in $M$ is non-compact.
Hence there exists a sequence of points $x_i\in\bar S$ which has no cluster points in $\bar S$.
Since $\bar S$ is closed in $M$, it also has no cluster points in $M$.
Each $x_i$ is a limit of a sequence of points $x_{ij}\in S$.
If $d$ is a metric on $M$, we may assume up to a renumbering that $d(x_{ij},x_i)<1/j$ for each $j$.
Then the sequence $(x_{ii})$ also has no cluster points.
Hence $F\bydef \{x_{ii}\mid i\in\N\}$ is non-compact and closed in $M$.
Also $F\subset S$, and therefore $F\in\bar\nu X$.
Thus $S$ meets the member $F$ of $\bar\nu X$ in the non-compact set $F$.
\end{proof}

If $\lambda$ is a directed family of closed subsets of a space $W$, let $\lambda^*$ denote the family of all
closed subsets of $W$ that meet every member of $\lambda$ in a compact set.
Clearly $\lambda^*$ is directed.
In these terms, Proposition \ref{duality} says that $\kappa X$ and $\bar\nu X$ are mutually dual in the sense that
$\kappa X=(\bar\nu X)^*$ and $\bar\nu X=(\kappa X)^*$.
It is easy to see that for any $\lambda$ as above, $\lambda^*$ and $\lambda^{**}$ are also mutually dual.%
\footnote{In fact, $*$ is an antitone Galois connection from the poset of all directed families of
closed subsets of $W$, ordered by inclusion, to the same poset.}

\begin{remark} 
In connection with Theorem \ref{homology-theories} one may wonder if the construction of $\kappa X$ and 
$\bar\nu X$ can be generalized to the setting where $X$ is itself endowed with 
a pair $\alpha=(\lambda,\mu)$ of directed families of its closed subsets such that 
$\lambda^*=\mu$ and $\mu^*=\lambda$.
If $X$ is of the form $X=N\but Y$, where $Y$ is a closed subset of $N$, and $\alpha=(\kappa Y,\bar\nu Y)$, 
then we want the generalized construction to be given by $\kappa X_\alpha=\kappa_{M\cup N} N$ and 
$\bar\nu X_\alpha=\bar\nu_{M\cup N} N$.
(The original construction of $\kappa X$ and $\bar\nu X$ would then correspond to the case $Y=\emptyset$.)
It turns out that such a generalization is impossible, as the following example shows.

Let $M=(-1,1)\x[0,1)$, $X=(-1,1)\x\{0\}$, $N=[-1,1)\x\{0\}$ and $Y=\{(-1,0)\}$.
Then $M\but X$ is homeomorphic to $W\bydef (-\infty,\infty)\x[0,\infty)$ by a homeomorphism $h$
given by $h(a,b)=(\tan\frac{a\pi}2,\tan\frac{b\pi}2)$.
Thus $\Gamma^+\bydef \{(x,\frac1x)\mid x\ge 1\}$ and $\Gamma^-=\{(-x,\frac1x)\mid x\ge 1\}$ are
quarter-hyperbolas lying in $W$.
Then we want $h^{-1}(\Gamma^+)$ to belong to $\bar\nu X_\alpha$ and not to belong to 
$\kappa X_\alpha$.
(Note that it belongs to $\bar\nu_{M\cup N}N$ and does not belong to $\kappa_{M\cup N}N$.)
We also want $h^{-1}(\Gamma^-)$ to belong to $\kappa X_\alpha$ and not to belong to 
$\bar\nu X_\alpha$.
(Note that it belongs to $\kappa_{M\cup N}N$ and does not belong to $\bar\nu_{M\cup N}N$.)
However, these two requirements are contradictory, since $h^{-1}(\Gamma^+)$ and $h^{-1}(\Gamma^-)$ are 
related by a self-homeomorphism of $M$ keeping $X$ fixed.
\end{remark}

\subsection{Approaching maps}

A continuous map $\phi\:A\to B$ is called {\it proper} if $\phi^{-1}(Q)$ is compact for every 
compact $Q\subset B$.

\begin{lemma}\label{mutually dual}
Let $W_1$ and $W_2$ be metrizable spaces and let $f\:W_1\to W_2$ be a continuous map.
Let $\kappa_i$ and $\bar\nu_i$ be directed families of closed subsets of $W_i$ 
such that $\kappa_i^*=\bar\nu_i$ and $\bar\nu_i^*=\kappa_i$.
Then the following are equivalent:
\begin{enumerate}
\item if $F\in\bar\nu_2$, then $f^{-1}(F)\in\bar\nu_1$;
\item if $K\in\kappa_1$, then $f(K)\in\kappa_2$ and $f|_K\:K\to f(K)$ is a proper map.
\end{enumerate}
\end{lemma}

\begin{proof}
{\it (2)$\Rightarrow$(1).}
Let $F\in\bar\nu_2$.
Given a $K\in\kappa_1$, by the hypothesis $f(K)\in\kappa_2$.
Then $Q\bydef f(K)\cap F$ is compact.
By the hypothesis $f|_K\:K\to f(K)$ is proper, hence $(f|_K)^{-1}(Q)$ is also compact.
It is easy to see that $(f|_K)^{-1}(Q)=K\cap f^{-1}(F)$.
Thus $f^{-1}(F)$ meets every $K\in\kappa_1$ in a compact set.
Hence $f^{-1}(F)\in\bar\nu_1$. 

{\it (1)$\Rightarrow$(2).} We will prove a slightly stronger assertion:
\begin{enumerate}
\item[(2$'$)] if $K\in\kappa_1$, then $f(K)\in\kappa_2$ and $f|_K\:K\to W_2$ is a proper map.
\end{enumerate}

Let $K\in\kappa_1$.
Given an $F\in\bar\nu_2$, by the hypothesis $f^{-1}(F)\in\bar\nu_1$.
Then $L\bydef K\cap f^{-1}(F)$ is compact.
Hence $f(L)$ is compact.
It is easy to see that $f(L)=f(K)\cap F$.
Thus $f(K)$ meets every $F\in\bar\nu_2$ in a compact set.
Hence $f(K)\in\kappa Y$.

Next we show that $f|_K\:K\to W_2$ is a proper map.
Let $Q$ be a compact subset of $W_2$.
Then $Q$ meets every member of $\kappa_2$ in a compact set, so $Q\in\bar\nu_2$.
Hence by the hypothesis $f^{-1}(Q)\in\bar\nu_1$.
Then $f^{-1}(Q)\cap K$ is compact.
But clearly $(f|_K)^{-1}(Q)=f^{-1}(Q)\cap K$.
\end{proof}

\begin{proposition} \label{map duality1}
Let $X$ and $Y$ be closed subsets of metrizable spaces $M$ and $N$, respectively, and let 
$f\:M\but X\to N\but Y$ be a continuous map.
The following are equivalent:
\begin{enumerate}
\item if $U\in\nu Y$, then $f^{-1}(U)\in\nu X$;
\item if $K\in\kappa X$, then $f(K)\in\kappa Y$ and $f|_K\:K\to f(K)$ is a proper map;
\item $f$ sends every sequence of points in $M\but X$ that has a cluster point in $X$ into
a sequence in $N\but Y$ that has a cluster point in $Y$.
\end{enumerate}
\end{proposition}

\begin{proof} Condition (1) can be equivalently reformulated as follows:
\begin{enumerate}
\item[(1$'$)] if $F\in\bar\nu Y$, then $f^{-1}(F)\in\bar\nu X$.
\end{enumerate}
By Lemma \ref{mutually dual} we have (1$'$)$\Leftrightarrow$(2).

{\it (3)$\Rightarrow$(1$'$).}
Suppose that $f^{-1}(F)\notin\bar\nu X$ for some $F\in\bar\nu Y$.
Then $f^{-1}(F)$ is closed in $M\but X$ but not in $M$.
Hence it contains a sequence $(x_n)$ converging to a point $x\in X$.
Since $F$ is closed in $N$, $\big(f(x_n)\big)$ has no cluster points in $Y$, 
which is a contradiction.

{\it (1$'$)$\Rightarrow$(3).} Suppose that $f$ sends some sequence of points 
$x_n\in M\but X$ that has a cluster point in $X$ into a sequence that has 
no cluster points in $Y$.
Let $F$ be the closure of $\{f(x_n)\mid n\in\N\}$ in $N$. 
Then $F$ lies in $N\but Y$.
Hence $f^{-1}(F)\in\bar\nu X$ by the hypothesis.
Since each $x_n\in f^{-1}(F)$, the sequence $(x_n)$ has no cluster points in $X$, 
which is a contradiction.
\end{proof}

If $X$ and $Y$ are closed subsets of metrizable spaces $M$ and $N$, respectively, a continuous map 
$f\:M\but X\to N\but Y$ will be called an {\it $X$-$Y$-approaching map} (or simply an {\it approaching map})
if it satisfies any of the equivalent conditions of Proposition \ref{map duality1}.

\begin{remark} Most revealing geometrically is perhaps the following characterization of approaching
maps, which is midway between conditions (2) and (3) of Proposition \ref{map duality1}.
A map $M\but X\to N\but Y$ is $X$-$Y$-approaching if and only if $f$ sends every sequence of points 
in $M\but X$ that converges to a point in $X$ into a sequence in $N\but Y$ whose underlying set
has compact closure in $M$.
(The proof is similar to that of Proposition \ref{map duality1}.)
\end{remark}

\begin{example} There exists a map of the form $f\:X\x(0,1]\to N\but Y$ whose restriction to $\{x\}\x(0,1]$ 
is $\{x\}$-$Y$-approaching for each $x\in X$, but $f$ is not $X$-$Y$-approaching.
The first example of this kind was constructed by V. Zemlyanoy.

Let $\N=\{1,2,\dots\}$ be the countable discrete space and $\N^+=\N\cup\{\infty\}$ its one-point compactification.
Let $f\:\N^+\x (0,1]\to [0,\infty)\x (0,1]$ be defined by
\[f(n,t)=
\begin{cases}
(0,t) &\text{ if }n=\infty\text{ or }t\ge\frac1n;\\
\big(n-n^2t,\,t)&\text{ if }n\ne\infty\text{ and }t<\frac1n.
\end{cases}
\]
For each $n\in\N^+$, the restriction $f|_{\{n\}\x(0,1]}$ extends to a continuous map
$\{n\}\x[0,1]\to [0,\infty)\x [0,1]$, and hence is $\{n\}$-$[0,\infty)$-approaching.
On the other hand, the sequence $x_n\bydef (n,\frac1{2n})$ converges to $(\infty,0)$ in $\N^+\x[0,1]$,
but the sequence $f(x_n)=(\frac n2,\frac1{2n})$ has no cluster points in $[0,\infty)\x[0,1]$.
Thus $f$ is not $\N^+$-$[0,\infty)$-approaching.
\end{example}

\subsection{Filtered and cofiltered maps}

A continuous map $f\:M\to N$ will be called a {\it filtered} map between filtered spaces $M_\kappa$
and $N_\lambda$ if every $K\in\kappa$ is sent by $f$ into some $L\in\lambda$; and a {\it cofiltered} map
between cofiltered spaces $M^\nu$ and $N^\mu$ if the $f$-preimage of every $U\in\mu$ contains 
some $V\in\nu$.

If we write $L=:f_\star(K)$ and $V=:f^\star(U)$, then $f_\star\:\kappa\to\lambda$ and $f^\star\:\mu\to\nu$
must be (not necessarily monotone) maps such that $f(K)\subset f_\star(K)$ for all $K\in\kappa$ 
and $f^\star(U)\subset f^{-1}(U)$ for all $U\in\mu$.
While these maps $f_\star$ and $f^\star$ are often convenient, they are not a part of the structure; 
only their existence is assumed.%
\footnote{Let us note that every cofiltered map $f\:M^\nu\to N^\mu$ induces a $\pro\Top$-morphism 
$\Xi^f$ between the inverse systems formed by the members of $\nu$ and $\mu$, and every choice of 
$f^\star$ yields an $\inv\Top$-morphism representing $\Xi^f$.
Similarly, every filtered map $f\:M_\kappa\to N_\lambda$ induces a $\ind\Top$-morphism $\Xi_f$ 
between the direct systems formed by the members of $\kappa$ and $\lambda$, and every choice of 
$f_\star$ yields a $\dir\Top$-morphism representing $\Xi_f$.
(See e.g.\ \cite{M00}*{\S\ref{book:pro}, \S\ref{book:ind}} for the definitions of $\pro\Top$,
$\ind\Top$, $\inv\Top$ and $\dir\Top$.)} 

Given filtered maps $M_\kappa\xr{f} M'_{\kappa'}\xr{g} M''_{\kappa''}$, clearly their composition
is filtered, with $(gf)_\star=g_\star f_\star$.
Dually, given cofiltered maps $M_1^{\nu_1}\xr{f} M_2^{\nu_2}\xr{g} M_3^{\nu_3}$, clearly their 
composition is cofiltered, with $(gf)^\star=f^\star g^\star$.

\begin{proposition} \label{map duality2}
Let $X$ and $Y$ be closed subsets of metrizable spaces $M$ and $N$, respectively, and let 
$f\:M\but X\to N\but Y$ be a continuous map.
The following are equivalent:
\begin{enumerate}
\item[(0)] $f$ is an $X$-$Y$-approaching map;
\item[(1$^\star$)] $f$ is a cofiltered map $(M\but X)^{\nu X}\to(N\but Y)^{\nu Y}$;
\item[(2$_\star$)] $f$ is a filtered map $(M\but X)_{\kappa X}\to(N\but Y)_{\kappa Y}$ and its restriction 
to each member of $\kappa X$ is a proper map into some member of $\kappa Y$.
\end{enumerate}
\end{proposition}

\begin{proof} In the notation of Proposition \ref{map duality1}, trivially
(1)$\Rightarrow$(1$^\star$) and (2)$\Rightarrow$(2$_\star$).

{\it (1$^\star$)$\Rightarrow$(1).}
Given a $U\in\nu Y$, let $V\in\nu X$ be given by the hypothesis.
Since $V\cup X$ is a neighborhood of each $x\in X$, so is $f^{-1}(U)\cup X$.
Since $U$ is open in $N\but Y$, $f^{-1}(U)$ is open in $M\but X$.
Hence $f^{-1}(U)\cup X$ is a neighborhood of each $x\in f^{-1}(U)$.
Thus $f^{-1}(U)\cup X$ is a neighborhood of every its point.
Hence $f^{-1}(U)\in\nu X$.

{\it (2$_\star$)$\Rightarrow$(2).}
Given a $K\in\kappa X$, let $L\in\kappa Y$ be given by the hypothesis.
We have $K=M_0\but X$ and $L=N_0\but Y$ for some compact $M_0\subset M$ and $N_0\subset N$.
Let $X_0=M_0\but K$ and $Y_0=N_0\but L$.
The members of $\bar\nu_{M_0} X_0$ and $\bar\nu_{N_0} Y_0$ are precisely the compact subsets 
of $K$ and $L$, respectively.
Since $f|_K\:K\to L$ is proper, we conclude that it satisfies (1$'$).
Hence by Proposition \ref{map duality1} it also satisfies (2).
In particular, $f|_K\:K\to f(K)$ is proper and $f(K)\in\kappa_{N_0} Y_0$.
Then clearly $f(K)\in\kappa Y$.
\end{proof}

\begin{remark}
So far we have seen only a very roundabout proof of (2$_\star$)$\Leftrightarrow$(3), 
including the entire proofs of Propositions \ref{duality} and \ref{map duality1} and 
Lemma \ref{mutually dual}.
But here is a direct proof.

{\it (2$_\star$)$\Rightarrow$(3).} 
Let $(x_n)$ be a sequence of points of $M\but X$ which has a cluster point $x\in X$.
Then some subsequence $(x_{n_k})$ converges to $x$.
Let $K=\{x\}\cup\{x_{n_k}\mid k\in\N\}$.
Since $K$ is compact, $f(K\but X)$ lies in a compact subset $L$ of $N$.
Then $\big(f(x_{n_k})\big)$ has a subsequence $(y_l)$ that converges to a point $y\in L$.
If $y\notin Y$, then $Q\bydef \{y\}\cup\{y_l\mid l\in\N\}$ is a compact subset of $L\but Y$ 
such that $\big(f|_{K\but X}\big)^{-1}(Q)$ is not closed in $M$ and hence not compact, 
which is a contradiction.
Hence $y\in Y$.

{\it (3)$\Rightarrow$(2$_\star$).} We will prove a slightly stronger assertion:
\begin{enumerate}
\item[(2$_\star'$)] if $K\in\kappa X$, then $f(K)$ lies in some $L\in\kappa Y$ and
$f|_K\:K\to N\but Y$ is proper.
\end{enumerate}

Let $K$ be a compact subset of $M$ and let $L$ be the closure 
of $f(K\but X)$ in $N$.
If $L$ is not compact, then it contains a sequence $(y_n)$ containing no convergent 
subsequences.
Let $y_n'\in L\but Y$ be such that $d(y_n,y'_n)\to 0$ as $n\to\infty$.
Then $(y'_n)$ also contains no convergent subsequences.
Each $y'_n=f(x_n)$ for some $x_n\in K\but X$.
Since $K$ is compact, $(x_n)$ has a cluster point $x\in K$.
If $x\in K\but X$, then $f(x)$ is a cluster point of $(y'_n)$, which is a contradiction.
If $x\in X$, then by the hypothesis $(y'_n)$ has a cluster point in $Y$, which is 
a contradiction.
Thus $L$ is compact.

Suppose that $N\but Y$ contains a compact subset $Q$ such that 
$Q^\star\bydef \big(f|_{K\but X}\big)^{-1}(Q)$ is not compact.
Then $Q^\star$ is closed in $M\but X$ but not closed in $K$.
Hence $Q^\star$ contains a sequence $(x_n)$ converging to a point $x\in X$.
Then $f(x_n)$ has a cluster point $y\in Y$.
Since each $f(x_n)\in Q$, we have $y\in Q$, contradicting $Q\cap Y=\emptyset$.
\end{remark}

\subsection{Approaching homotopies} 

A subset $S$ of a poset $P$ is called {\it cofinal} (resp.\ {\it coinitial}) in $P$ if every element of $P$ 
has an upper (resp.\ lower) bound in $S$.
Clearly, a cofinal (resp.\ coinitial) subset of a directed (resp.\ codirected) poset is also a directed 
(resp.\ codirected) poset.

It is easy to see that if $\kappa'$ is cofinal in $\kappa$ and $\lambda'$ is cofinal in $\lambda$,
a continuous map $f\:M\to N$ is a filtered map $M_\kappa\to N_\lambda$ if and only if it is a filtered map
$M_{\kappa'}\to N_{\lambda'}$.
Dually, if $\nu'$ is coinitial in $\nu$ and $\mu'$ is coinitial in $\mu$, then a continuous map $f\:M\to N$ is a 
cofiltered map $M^\nu\to N^\mu$ if and only if it is a cofiltered map $M^{\nu'}\to N^{\mu'}$.

\begin{lemma} \label{approaching homotopy}
Let $X$ and $Y$ be closed subsets of spaces $M$ and $N$, respectively, and let
$\sigma\otimes\tau=\{S\x T\mid S\in\sigma,\,T\in\tau\}$.
Then 

(a) $\kappa X\otimes\{I\}$ is cofinal in $\kappa(X\x I)$;

(b) $\nu X\otimes\{I\}$ is coinitial in $\nu(X\x I)$.
\end{lemma}

\begin{proof}
If $K\subset M\x I$ is compact, then so is $\pi(K)$, where $\pi\:M\x I\to M$ is the projection.
So $K$ is contained in the compact set $\pi(K)\x I$.
Thus $\kappa X\otimes\{I\}$ is cofinal in $\kappa(X\x I)$.

Let $U$ be an open neighborhood of $X\x I$ in $M\x I$.
Since $I$ is compact, $\pi\:M\x I\to M$ is a closed map (see \cite{M00}*{Lemma \ref{book:closed map}(a)}).
Hence $V\bydef M\but\pi(M\x I\but U)$ is open.
So $U$ contains the open neighborhood $V\x I$ of $X$.
Thus $\nu X\otimes\{I\}$ is coinitial in $\nu(X\x I)$.
\end{proof}

A homotopy $M\x I\to N$ between $f,g\:M\to N$ will be called
a {\it (co)filtered homotopy} between filtered maps $f,g\:M_\kappa\to N_\lambda$ 
(resp.\ cofiltered maps $f,g\:M^\nu\to N^\mu$) if it is a filtered map 
$(M\x I)_{\kappa\otimes\{I\}}\to N_\lambda$ (resp.\ a cofiltered map $(M\x I)^{\nu\otimes\{I\}}\to N^\mu$).

\begin{proposition} \label{homotopy duality}
Let $X$ and $Y$ be closed subsets of metrizable spaces $M$ and $N$, respectively, and 
let $H\:(M\but X)\x I\to N\but Y$ be a homotopy.
The following are equivalent:
\begin{enumerate}
\item $H$ is an $(X\x I)$-$Y$-approaching map;
\item $H$ is a cofiltered homotopy $(M\x I\but X\x I)^{\nu X\otimes\{I\}}\to (N\but Y)^{\nu Y}$;
\item $H$ is a filtered homotopy $(M\x I\but X\x I)_{\kappa X\otimes\{I\}}\to (N\but Y)_{\kappa Y}$ 
and its restriction to each member of $\kappa X\otimes\{I\}$ is a proper homotopy%
\footnote{That is, a homotopy that is a proper map.} in some member of $\kappa Y$.
\end{enumerate}
\end{proposition}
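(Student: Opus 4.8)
The plan is to recognize that this proposition is essentially Proposition \ref{map duality} applied to the pair $(X\x I,\,Y)$ and the map $H$, followed by a reduction from the full (co)filtrations $\nu(X\x I)$ and $\kappa(X\x I)$ to their product subfamilies $\nu X\x\{I\}$ and $\kappa X\x\{I\}$ by means of Lemma \ref{approaching homotopy}. Since $(M\x I)\but(X\x I)=(M\but X)\x I$ and $X\x I$ is closed in $M\x I$, the map $H$ is a continuous map $(M\x I)\but(X\x I)\to N\but Y$ to which Proposition \ref{map duality} applies verbatim with $M,X$ replaced by $M\x I,\,X\x I$. My first step is therefore to unpack condition (1) into its two equivalent forms supplied by that proposition: (1a) $H$ is a cofiltered map $((M\x I)\but(X\x I))^{\nu(X\x I)}\to(N\but Y)^{\nu Y}$; and (1b) $H$ is a filtered map $((M\x I)\but(X\x I))_{\kappa(X\x I)}\to(N\but Y)_{\kappa Y}$ whose restriction to each member of $\kappa(X\x I)$ is proper.

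For $(1)\Leftrightarrow(2)$ I would argue through (1a). By Lemma \ref{approaching homotopy}(b) the family $\nu X\x\{I\}$ is coinitial in $\nu(X\x I)$, while $\nu Y$ is coinitial in itself. Hence, by the invariance of cofiltered maps under passage to coinitial subfamilies (the remark preceding Lemma \ref{approaching homotopy}), condition (1a) holds iff $H$ is a cofiltered map with respect to these coinitial subfamilies, i.e.\ a cofiltered map $((M\x I)\but(X\x I))^{\nu X\x\{I\}}\to(N\but Y)^{\nu Y}$. By the definition of a cofiltered homotopy this is precisely condition (2).

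For $(1)\Leftrightarrow(3)$ I would argue through (1b), treating its ``filtered'' part and its ``proper'' part separately. The filtered part is handled symmetrically to the above: by Lemma \ref{approaching homotopy}(a) the family $\kappa X\x\{I\}$ is cofinal in $\kappa(X\x I)$ and $\kappa Y$ is cofinal in itself, so by the corresponding cofinality invariance, $H$ is filtered with respect to $\kappa(X\x I)$ and $\kappa Y$ iff it is filtered with respect to $\kappa X\x\{I\}$ and $\kappa Y$. There remains the matching of the two properness requirements. Since each member $(L\but X)\x I$ of $\kappa X\x\{I\}$ equals $(L\x I)\but(X\x I)$ with $L\x I$ compact in $M\x I$, the product subfamily $\kappa X\x\{I\}$ is literally a sub-collection of $\kappa(X\x I)$; thus if $H$ restricts properly to every member of $\kappa(X\x I)$, it does so in particular to every member of $\kappa X\x\{I\}$.

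The one step that needs a small geometric argument — and the step I expect to be the main obstacle — is the converse properness implication: that properness of $H$ on each member of the cofinal subfamily $\kappa X\x\{I\}$ forces properness on each member of the full family $\kappa(X\x I)$. Given $K=C\but(X\x I)\in\kappa(X\x I)$ with $C\i M\x I$ compact, the computation in the proof of Lemma \ref{approaching homotopy}(a) gives $C\i\pi(C)\x I$, so $K$ is contained in the member $K':=(\pi(C)\but X)\x I$ of $\kappa X\x\{I\}$. Because $K$ is closed in $(M\x I)\but(X\x I)$ and $K\i K'$, it is closed in $K'$. As the restriction of a proper map to a closed subset of its domain is again proper — the inverse image of a compact set being a closed subset of a compact set — the map $H|_K=(H|_{K'})|_K$ is proper. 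This matches the properness conditions and yields $(1)\Leftrightarrow(3)$; combined with $(1)\Leftrightarrow(2)$, all three conditions are equivalent.
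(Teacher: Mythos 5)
Your proof is correct and takes essentially the same route as the paper: the paper's (two-sentence) proof likewise deduces $(1)\Leftrightarrow(2)$ from Lemma \ref{approaching homotopy}(b) and $(1)\Leftrightarrow(3)$ from Lemma \ref{approaching homotopy}(a) together with the fact that the restriction of a proper map to a closed subset of the domain is proper, which is exactly the key step you isolate for the converse properness implication. Your write-up is simply a fully expanded version of that argument, including the observation that $\kappa X\x\{I\}\subset\kappa(X\x I)$ and the containment $K\subset(\pi(C)\but X)\x I$, which the paper leaves implicit.
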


\begin{proof} The equivalence of (1) and (2) follows from Lemma \ref{approaching homotopy}(b).
The equivalence of (1) and (3) follows from Lemma \ref{approaching homotopy}(a) and the fact that
the restriction of a proper map to a closed subset of the domain is proper.
\end{proof}

If $X$ and $Y$ are closed subsets of metrizable spaces $M$ and $N$, respectively, a homotopy 
$H\:(M\but X)\x I\to N\but Y$ will be called an {\it $X$-$Y$-approaching homotopy} (or simply 
an {\it approaching homotopy}) if it satisfies any of the equivalent conditions of 
Proposition \ref{homotopy duality}. 
The set of $X$-$Y$-approaching homotopy classes of $X$-$Y$-approaching maps $M\but X\to N\but Y$
will be denoted $[X,Y]_{MN}$.

Given an $X$-$Y$-approaching map $f\:M\but X\to N\but Y$ and a $Y$-$Z$-approaching map
$g\:N\but Y\to L\but Z$, clearly their composition is an $X$-$Z$-approaching map.
Moreover, it is easy to see that the composition is well defined on approaching homotopy classes.

An $X$-$Y$-approaching map $f\:M\but X\to N\but X$ is an {\it $X$-$Y$-approaching homotopy equivalence} 
if there exists an $Y$-$X$-approaching map $g\:N\but Y\to M\but X$ such that $gf$ is 
$X$-$X$-approaching homotopic to $\id_{M\but X}$ and $fg$ is $Y$-$Y$-approaching homotopic 
to $\id_{N\but Y}$.

\subsection{Z-sets in absolute retracts} \label{z-sets}

By {\it absolute retracts} and {\it ANRs} we mean those for metrizable spaces.
See \cite{M00}*{\S\ref{book:anrs}} for their basic theory.

A subset $U$ of a space $M$ is called {\it homotopy dense} in $M$ if there exists 
a homotopy $h_t\:M\to M$ such that $h_0=\id$ and $h_t(M)\subset U$ for each $t>0$.
A subset of $X$ of a space $M$ is called {\it homotopy negligible} in $M$ if $M\but X$
is homotopy dense in $M$.

We call a subset $X$ of an ANR $M$ a {\it Z-set} in $M$ if $X$ is closed and homotopy
negligible in $M$.

\begin{lemma} \label{ar-embedding}
Every metrizable space $X$ embeds as a Z-set in some absolute retract $M$.
If $X$ is separable and/or completely metrizable, then so is $M$.
\end{lemma}

The constructions of $M$ in the proof and in the remarks below have the advantage of being 
quite elementary.
Later on we will discuss several other constructions, which have other advantages.

\begin{proof} 
By Wojdyslawski's theorem $X$ is homeomorphic to a closed subset of a convex subset $C$ of 
a normed vector space; if $X$ is separable and/or completely metrizable, then so is $C$ 
(see \cite{M00}*{Theorem \ref{book:wojdyslawski}}).
By Dugundji's theorem $C$ is an absolute retract (see \cite{M00}*{Theorem \ref{book:Dugundji extension}}).
Then $C\x I$ is also an absolute retract (see \cite{M00}*{Lemma \ref{book:ANR-basic}(a)}).
Also, $X\x\{1\}$ is clearly a Z-set in $C\x I$.
\end{proof}

\begin{remark} \label{Z-embeddings}
(a) Let us recall that $\R$, $[0,\infty)$ and $I=[0,1]$ are absolute retracts
(see \cite{M00}*{Theorem \ref{book:Dugundji extension}}) and that a finite or countable product of
absolute retracts is an absolute retract (see \cite{M00}*{Lemma \ref{book:ANR-basic}(a)}).

(b) If $X$ is a Polish space (=separable, completely metrizable), it embeds as a Z-set in the countable
product of lines $\R^\infty$ (see \cite{M00}*{Theorem \ref{book:Z-embedding}}). 

(c) If $X$ is a compactum (=compact metrizable space), it embeds in the Hilbert cube $I^\infty$
(see \cite{M00}*{Lemma \ref{book:product-embedding2}}), and hence embeds as a Z-set 
in $I^\infty\x I\cong I^\infty$.

(d) If $X$ is a local compactum (=locally compact separable metrizable space), it embeds
onto a closed subset of $I^\infty\x [0,\infty)$ (see \cite{M00}*{Proposition \ref{book:lc-emb}}).
Hence $X$ embeds as a Z-set in $I^\infty\x[0,\infty)\x I\cong I^\infty\x[0,\infty)$.

(e) If $X$ is a $n$-dimensional compactum, it embeds in $I^{2n+1}$, and hence embeds as a Z-set in $I^{2n+2}$.

(f) If $X$ is a $n$-dimensional local compactum, it embeds as a closed subset in $I^{2n}\x[0,\infty)$
(see \cite{M00}*{Corollary \ref{book:n-dim-emb}(b)}).
Hence $X$ embeds as a Z-set in $I^{2n+1}\x[0,\infty)$.
\end{remark}

\begin{lemma} \label{Milnor} 
Let $X$ be a closed subset of a metrizable space $M$, and let $Y$ be a Z-set in an absolute retract $N$.
Let $\phi\:X\to Y$ be a continuous map.
Then 

(a) $\phi$ extends to a continuous map $F\:M\to N$ such that $F^{-1}(Y)=X$;

(b) if $\phi$ is homotopic to a $\psi$ which extends to a $G\:M\to N$ 
such that $G^{-1}(Y)=X$, then $F$ is homotopic to $G$ by a homotopy $H_t$ such that 
$H_t^{-1}(Y)=X$ for each $t\in I$.
\end{lemma}

This is well-known at least for compacta (see \cite{M1}*{Lemma 2.1} and references there).

\begin{proof}[Proof. (a)] Since $N$ is an absolute retract (and hence an absolute extensor) and $X$ is closed, 
$\phi$ extends to a continuous map $f\:M\to N$.
Since $M$ is metrizable and $X$ is closed, there exists a continuous map $g\:M\to I$ such that $X=g^{-1}(0)$.
Since $Y$ is a Z-set in $N$, there exists a homotopy $h\:N\x I\to N$ such that $h_0=\id$ 
and $h_t(Y)\subset N\but Y$ for $t>0$.
Then $h^{-1}(Y)=Y\x\{0\}$, and hence the composition $F\:M\xr{f\x g}N\x I\xr{h}N$ is as desired.
\end{proof}

\begin{proof}[(b)] This follows from (a), using that $X\x I\cup M\x\partial I$ is a closed subset of $M\x I$ 
and $Y\x I\cup N\x\partial I$ is a Z-set in $N\x I$.
\end{proof}

\subsection{Fine shape category} \label{fsm-section}

Lemma \ref{Milnor} immediately implies the following

\begin{theorem} \label{fsm}
Suppose that $X$ and $Y$ are Z-sets in absolute retracts $M$ and $N$, respectively.
Then for every map $\phi\:X\to Y$ 

(a) there exists an $X$-$Y$-approaching map $\phi_{MN}\:M\but X\to N\but Y$ which combines with $\phi$ 
into a continuous map $M\to N$; 

(b) such a map $\phi_{MN}$ is unique up to $X$-$Y$-approaching homotopy;

(c) if $\phi$ is homotopic to $\psi$, then $\phi_{MN}$ is $X$-$Y$-approaching homotopic to $\psi_{MN}$.
\end{theorem}

Thus we get a map from the homotopy set $[X,Y]$ to the set $[X,Y]_{MN}$ of $X$-$Y$-approaching homotopy
classes of $X$-$Y$-approaching maps $M\but X\to N\but Y$.
Moreover, the map $[X,X]\to[X,X]_{MM}$ sends $[\id_X]$ to $[\id_{M\but X}]$ and the diagram
\[\begin{CD}
[X,Y]\x [Y,Z]@>\circ>>[X,Z]\\
@VVV@VVV\\
[X,Y]_{MN}\x [Y,Z]_{NL}@>\circ>>[X,Z]_{ML}
\end{CD}\]
commutes.

\begin{corollary} \label{fsm1}
Suppose that $X$ is a Z-set in two absolute retracts $M$ and $M'$.
Then $M\but X$ and $M'\but X$ are $X$-$X$-approaching homotopy equivalent. 
Specifically, the $X$-$X$-approaching maps $(\id_X)_{MM'}\:M\but X\to M'\but X$ and 
$(\id_X)_{M'M}\:M'\but X\to M\but X$ are mutually inverse up to $X$-$X$-approaching homotopy.
\end{corollary}

Suppose that $X$ is a Z-set in absolute retracts $M$ and $M'$, and $Y$ is a Z-set 
in absolute retracts $N$ and $N'$.
We say that $X$-$Y$-approaching maps $\Phi\:M\but X\to N\but Y$ and $\Phi'\:M'\but X\to N'\but Y$
represent the same {\it fine shape morphism} $X\to Y$ if the diagram
\[\begin{CD}
M\but X@>\Phi>>N\but Y\\
@V(\id_X)_{MM'}VV@VV(\id_Y)_{NN'}V\\
M'\but X@>\Phi'>>N'\but Y
\end{CD}\]
commutes up to $X$-$Y$-approaching homotopy. 
To be more precise, the relation that we have just defined on $X$-$Y$-approaching maps is 
a well-defined equivalence relation (it does not depend on the choice of the maps $(\id_X)_{MM'}$ 
and $(\id_Y)_{NN'}$ by Theorem \ref{fsm}(b) and it is symmetric by Corollary \ref{fsm1}), and 
fine shape morphisms $X\to Y$ are defined as its equivalence classes.

If $X$ and $Y$ are metrizable spaces, let $[X,Y]_\Fish$ denote the set of fine shape morphisms $X\to Y$.
Lemma \ref{ar-embedding} and Theorem \ref{fsm}(c) yield a map from the set $[X,Y]$ of homotopy classes 
of maps $X\to Y$ into $[X,Y]_\Fish$, given by $[\phi]\mapsto[\phi_{MN}]$.
It follows from Theorem \ref{fsm}(b) that this map does not depend on the choice of 
the absolute retracts $M$ and $N$.

\begin{proposition} \label{representation}
Let us fix embeddings of $X$ and $Y$ as Z-sets in absolute retracts $M$ and $N$.
Then there is a bijection between $[X,Y]_\Fish$ and the set $[X,Y]_{MN}$ of 
$X$-$Y$-approaching homotopy classes of $X$-$Y$-approaching maps $M\but X\to N\but Y$.
\end{proposition}

\begin{proof} By assigning to an approaching homotopy class the fine shape morphism which it
represents we get a map $f\:[X,Y]_{MN}\to[X,Y]_\Fish$.

If $X$-$Y$-approaching maps $\Phi,\Psi\:M\but X\to N\but Y$ represent the same fine shape 
morphism $X\to Y$, then they are $X$-$Y$-approaching homotopic (since $(\id_X)_{MM}$ is 
$X$-$X$-approaching homotopic to $\id_{M\but X}$, and similarly for $(\id_Y)_{NN}$).
Hence $f$ is injective.

Given a fine shape morphism $X\to Y$, it must be represented by an $X$-$Y$-approaching map 
$\Phi\:M'\but X\to N'\but Y$ for some absolute retracts $M'$ and $N'$ containing $X$ and $Y$
as Z-sets. 
Then by Corollary \ref{fsm1} it is also represented by the composition 
$M\but X\xr{(\id_X)_{MM'}}M'\but X\xr{\Phi}N'\but Y\xr{(\id_Y)_{N'N}}N\but Y$.
Hence $f$ is surjective.
\end{proof}

Using Proposition \ref{representation}, we define composition of fine shape morphisms $X\to Y\to Z$ 
by composing their representative approaching maps $M\but X\to N\but Y\to L\but Z$.
Using Theorem \ref{fsm}(b) it is easy to check that the result does not depend on the choice of 
the absolute retracts $M$, $N$ and $L$.
Thus we have defined the {\it fine shape category} and we have constructed a forgetful functor 
$\F^\Ho_\Fish$ from the homotopy category of metrizable spaces to the fine shape category.

A fine shape morphism $\phi\:X\to Y$ is called a {\it fine shape equivalence} if it is represented by 
an approaching homotopy equivalence; or equivalently if there exists a fine shape morphism
$\psi\:Y\to X$ such that $\psi\phi=[\id_X]$ and $\phi\psi=[\id_Y]$.
When such a $\phi$ exists, $X$ and $Y$ are said to be of the same {\it fine shape}.

\begin{remark}
The reader is invited to review the variations of the previous definitions (given, in particular, by 
the three equivalent definitions of approaching maps) by thinking about $[pt,W]_\Fish$, where 
$W\subset\R^2$ is the comb-and-flea set (see Example \ref{comb-example}).
\end{remark}

\section{First steps}

\subsection{Germs of approaching maps} \label{germs}

In this section we give an alternative definition of the fine shape category, with absolute retracts replaced by ANRs.

Let $X$ and $Y$ be closed subsets of metrizable spaces $M$ and $N$, respectively.
Given open neighborhoods $U$ and $V$ of $X$ in $M$ such that $V\subset U$, the restriction map $r_{UV}\:[X,Y]_{UN}\to [X,Y]_{VN}$ 
is defined by $F\mapsto F(\id_X)_{VU}$ (using the notation of Theorem \ref{fsm}), or equivalently by $[f]\mapsto [f|_{V\but X}]$.
Let $[X,Y]^\infty_{MN}$ be the direct limit \[\colim([X,Y]_{UN},\,r_{UV};\,\NN_X)\] in the category of sets, 
where $\NN_X$ is the directed poset of all open neighborhoods of $X$ in $M$ ordered by reverse inclusion.
(Clearly, $\NN_X$ is isomorphic to $\bar\nu_X$ as a poset.)

Thus elements of $[X,Y]^\infty_{MN}$ are represented by $X$-$Y$-approaching maps of the form 
$f\:U\but X\to N\but Y$, where $U$ is an open neighborhood of $X$ in $M$; another such map $g\:V\but X\to N\but Y$ represents 
the same element of $[X,Y]^\infty_{MN}$ if and only if there exists an open neighborhood $W$ of $X$ in 
$U\cap V$ such that $f|_{W\but X}$ is $X$-$Y$-approaching homotopic to $g|_{W\but X}$.
The class of the map $f$ in $[X,Y]^\infty_{MN}$ will be denoted by $[f]_\infty$.

\begin{lemma} \label{Milnor2} 
Let $X$ be a closed subset of a metrizable space $M$, and let $Y$ be a Z-set in an ANR $N$.
Let $\phi\:X\to Y$ be a continuous map.
Then 

(a) $\phi$ extends to a continuous map $F\:U\to N$ of some open neighborhood $U$ of $X$ in $N$ 
such that $F^{-1}(Y)=X$;

(b) for every other extension $\tilde F\:\tilde U\to N$ of $\phi$ to an open neighborhood $\tilde U$ 
of $X$ in $M$ such that $\tilde F^{-1}(Y)=X$ there exists an open neighborhood $V$ of $X$ in $M$ and 
a homotopy $H_t\:V\to N$ between $F|_V$ and $\tilde F|_V$ such that $H_t|_X=\phi$ and 
$H_t^{-1}(Y)=X$ for each $t\in I$.
\end{lemma}

\begin{proof} Similarly to Lemma \ref{Milnor}.
\end{proof}

\begin{lemma} \label{open subset}
If $M$ is an ANR, $X$ is a Z-set in $M$ and $U$ is an open neighborhood of $X$ in $M$,
then $U$ is an ANR and $X$ is a Z-set in $U$.
\end{lemma}

\begin{proof} See \cite{M00}*{Lemmas \ref{book:ANR-basic}(c) and \ref{book:z-set-rel}(a)}.
\end{proof}

\begin{proposition} \label{representation3}
Let $X$ and $Y$ be Z-sets in ANRs $M$ and $N$.
Then there is a bijection between $[X,Y]_\Fish$ and $[X,Y]^\infty_{MN}$.
\end{proposition}

\begin{proof} Similarly to Proposition \ref{representation}, using Lemmas \ref{Milnor2} and \ref{open subset}.
\end{proof}

If $Z$ is a closed subset of a metrizable space $L$, we can define the composition map
$[X,Y]^\infty_{MN}\x [Y,Z]^\infty_{NL}\to[X,Z]^\infty_{ML}$ by $[g]_\infty[f]_\infty=[g(f|_O)]_\infty$, where $O$ is 
the $f$-inverse of the domain of $g$.
It is not hard to see that this does not depend of the choice of the representatives, using that by 
Lemma \ref{approaching homotopy}(b) for every $X$-$Y$-approaching homotopy $f_t\:U\but X\to N\but Y$, 
where $U\in\NN_X$, and each $V\in\NN_Y$ there exists a $W\in\NN_X$ such that $W\but X\subset f_t^{-1}(V\but Y)$ for each $t\in I$.

\subsection{Homotopy}

\begin{theorem} \label{ANR}
If $Q$ is an ANR, then the forgetful map $\F^\Ho_\Fish\:[X,Q]\to[X,Q]_\Fish$ is a bijection 
for every metrizable space $X$.
\end{theorem}

This could be expected, but it takes a bit of work to prove it.

\begin{proof} 
We will use Proposition \ref{representation3}.
Let $N$ be an ANR containing $X$ as a Z-set, and let $M$ be an open neighborhood of $X$ in $N$.
Then there exists a homotopy $\phi_t^M\:N\to N$ such that $\phi_0=\id_N$, 
$\phi_t|_{N\but M}=\id_{N\but M}$ for all $t\in I$ and $\phi_t(N)\subset N\but X$ 
for all $t>0$ \cite{M00}*{Lemma \ref{book:z-set-rel}}.
Given another open neighborhood $L$ of $X$ in $N$, we have the homotopy 
$\Phi_t^{ML}\:N\to N\but X$ between $\phi_1^M$ and $\phi_1^L$ defined by 
$\Phi_t^{ML}(x)=\phi_{1-t}^M\big(\phi_t^L(x)\big)$.
Let us note that $\Phi_t^{ML}|_{N\but (M\cup L)}=\id_{M\but (M\cup L)}$ for all $t\in I$.

Given an $X$-$Q$-approaching map $G\:M\but X\to Q\x(0,1]$, let us consider 
the composition $g\:X\xr{\phi_1^M|_X}M\but X\xr{G}Q\x(0,1]\xr{\pi_1}Q$, where $\pi_1$
is the projection onto the first factor.

Given another $X$-$Q$-approaching map $G'\:M'\but X\to Q\x(0,1]$, where $M'$ is another 
open neighborhood of $X$ in $N$, and an $X$-$Q$-approaching homotopy $H_t\:L\but X\to Q\x(0,1]$ 
between $H_0=G|_{L\but X}$ and $H_1=G'|_{L\but X}$ for some open neighborhood $L$ of $X$ in $M\cap M'$,
the map $g$ is homotopic to the composition $g'\:X\xr{\phi_1^{M'}|_X}M'\but X\xr{G'}Q\x(0,1]\xr{\pi_1}Q$ 
via the stacked combination of the homotopies 
\begin{align*}
& X\xr{\Phi_t^{ML}|_X}M\but X\xr{G}Q\x(0,1]\xr{\pi}Q\\
& X\xr{\ \,\phi_1^L|_X\ \,}L\but X\xr{H_t}Q\x(0,1]\xr{\pi}Q\\
& X\xr{\Phi_t^{LM'}|_X}M'\but X\xr{G'}Q\x(0,1]\xr{\pi}Q.
\end{align*}
Thus we get a map $\sigma\:[X,Q]_\Fish\to[X,Q]$.

On the other hand, if $G$ happens to be the restriction of a continuous map 
$F\:M\to Q\x I$ whose another restriction is $f\:X\to Q\x\{0\}$, then $g$ 
is homotopic to $f$ via $h_t\:X\xr{\phi_t|_X}M\xr{F}Q\x I\xr{\pi}Q$.
Thus the composition $[X,Q]\xr{\F^\Ho_\Fish}[X,Q]_\Fish\xr{\sigma}[X,Q]$
is the identity, and hence $\F^\Ho_\Fish$ is injective.

Now let us consider the homotopy 
$\Phi_t\:M\but X\xr{\phi_t|}M\but X\xr{G}Q\x(0,1]\xr{\pi_1}Q$.
Its final map $\Phi_1$ extends via $g$ to a continuous map
$\bar\Phi_1\:M\to Q$.
On the other hand, the composition $\Psi\:M\but X\xr{G} Q\x(0,1]\xr{\pi_2}(0,1]$ 
with the projection onto the second factor extends to a continuous map 
$\bar\Psi\:M\to I$.
(Indeed, given a sequence of points $x_n\in M\but X$ converging to an $x\in X$,
the set $K\bydef \{x_1,x_2,\dots\}$ is a member of $\kappa X$, so $G$ sends it onto
$G(K)$ by a proper map.
Hence only finitely many of the $x_i$ land in $Q\x(\frac1n,1]$ for each $n$,
and consequently $\Psi(x_i)\to 0$.)
Thus $\Phi_t\x\Psi\:M\but X\to Q\x(0,1]$ is a homotopy from $\Phi_0\x\Psi=G$ 
to the map $\Phi_1\x\Psi$ which extends via $g\x\{0\}\:X\to Q\x\{0\}$ to 
a continuous map $\bar\Phi_1\x\bar\Psi\:M\to Q\x I$.

To prove that $\F^\Ho_\Fish\:[X,Q]\to[X,Q]_\Fish$ is surjective it remains to
show that $\Phi_t\x\Psi$ is an $X$-$Q$-approaching homotopy.
Given a compact set $K\subset M\x I$, the homotopy $\phi\:M\x I\to M$, 
$\phi(x,t)=\phi_t(x)$, sends $K$ onto a compact set $K'$.
Since $G$ is an $X$-$Q$-approaching map, $G(K'\but X)=K''\but Q\x\{0\}$ for some 
compact set $K''\subset Q\x I$.
Hence the composition $\Phi\:(M\but X)\x I\xr{\phi|}M\but X\xr{G}Q\x(0,1]\xr{\pi_1}Q$
sends $K\but X\x I$ into the compact set $L\bydef \pi_2(K'')$.
Consequently $\Phi\x(\Psi c)\:(M\but X)\x I\to Q\x (0,1]$, where 
$c\:(M\but X)\x I\to M\but X$ is the projection, sends $K\but X\x I$ into
$L\x (0,1]$, which is a member of $\kappa_Q$.
It remains to show that the restriction $r\:K\but X\x I\to L\x (0,1]$ of 
$\Phi\x(\Psi c)$ is a proper map.
Now $r^{-1}(L\x[\frac1n,1])=(K\but X\x I)\cap\Psi^{-1}([\frac1n,1])\x I$,
where $\Psi^{-1}([\frac1n,1])=G^{-1}(Q\x [\frac1n,1])$ is a member of $\bar\nu X$
since $G$ is an $X$-$Q$-approaching map and $Q\x [\frac1n,1]$ is a member of $\bar\nu Q$.
Since $K\but X\x I$ is a member of $\kappa(X\x I)$ and $\Psi^{-1}([\frac1n,1])\x I$
is a member of $\bar\nu(X\x I)$, their intersection $r^{-1}(L\x[\frac1n,1])$
is compact (see Proposition \ref{duality}).
It follows that $r$ is a proper map.
\end{proof}

\subsection{Approaching category}\label{approaching cat}

In this section we show that if $M$ and $N$ are arbitrary metrizable spaces (rather than absolute retracts) 
and $X$ and $Y$ are their homotopy negligible closed subsets, then every $X$-$Y$-approaching homotopy class
$M\but X\to N\but Y$ already determines a fine shape morphism $X\to Y$ (but generally not every fine shape 
morphisms $X\to Y$ can be obtained in this way, unless $N$ is an absolute retract).

\begin{lemma}\label{X-X-telescope}
Let $M$ be a metrizable space and $\dots\subset X_1\subset X_0=M$ its subsets such that
each $\Cl X_{i+1}\subset\Int X_i$ and $X\bydef \bigcap_i X_i$ is homotopy negligible in $M$.
Then $M\but X$ is $X$-$X$-approaching homotopy equivalent to the mapping telescope $X_{[0,\infty)}$.
\end{lemma}

\begin{proof}
Let $X_i^\circ=X_i\but X$.
By \cite{M00}*{Proposition \ref{book:lc-telescope2}} $M$ is homotopy equivalent to 
$X^\circ_{[0,\infty)}\cup X\x\{\infty\}$.
Moreover, the homotopy equivalence is by construction given by a deformation retraction
of $X^\circ_{[0,\infty)}\cup X\x\{\infty\}$ onto a copy $e(M)$ of $M$ which keeps $e(X)=X\x\{\infty\}$ 
fixed.
Hence $M\but X$ is $X$-$X$-approaching homotopy equivalent to $X^\circ_{[0,\infty)}$.

Since $X$ is homotopy negligible in $M$, clearly $X\x [0,\infty]$ is homotopy negligible in $L\bydef M\x[0,\infty]$,
and in particular $Y\bydef X\x [0,\infty)$ is homotopy negligible in $L$.
Clearly $X_{[0,\infty)}$ is a neighborhood of $Y$ in $L$, and hence so is $N\bydef X_{[0,\infty)}\cup X\x\{\infty\}$.
Then by \cite{M00}*{Lemma \ref{book:instant takeoff2}} $Y$ is homotopy negligible in $N$, i.e.\ there exists 
a homotopy $H_t\:N\to N$ such that $H_0=\id_N$ and $H_t(N)\subset N\but Y$ for $t>0$.
Moreover, by the construction of $H_t$ it restricts to a homotopy of $\Int N$ and keeps $N\but\Int N$ fixed.
In particular, $H_t$ keeps $X\x\{\infty\}$ fixed and restricts to a homotopy $h_t$ of $X_{[0,\infty)}$. 
Since $h_t(X_{[0,\infty)})\subset X^\circ_{[0,\infty)}$ for $t>0$, we get that $X_{[0,\infty)}$ is 
$X$-$X$-approaching homotopy equivalent to $X^\circ_{[0,\infty)}$.
\end{proof}

\begin{lemma} \label{graph-approximation} 
Let $M$ and $N$ be metric spaces, let $X$ be a subset $M$, and let $f\:M\to N$ be a map.
Then for each $\eps>0$ there exists an open neighborhood $U$ of $X$ in $M$ such that for each $p\in U$ 
there exists an $x\in X$ such that $d(p,x)<\eps$ and $d\big(f(p),f(x)\big)<\eps$.
\end{lemma}

\begin{proof} Let $\Gamma\:M\to M\x N$ be defined by $\Gamma(p)=\big(p,f(p)\big)$.
Let $V$ be the $\eps$-neighborhood of $\Gamma(X)$ in $M\x N$ in the $l_\infty$ product metric 
$d\big((x,y),(x',y')\big)=\max\big(d(x,x'),\,d(y,y')\big)$.
Then $U\bydef \Gamma^{-1}(V)$ satisfies the desired property.
\end{proof}

\begin{lemma} \label{approaching extension} 
Let $M$ and $N$ be metric spaces, let $\dots\subset X_1\subset X_0$ be closed subsets of $M$
and $\dots\subset N_1\subset N_0$ be ANR subsets of $N$.
Also let $X\subset\bigcap_i X_i$ and $Y\subset\bigcap_i N_i$.

Given an $X$-$Y$-approaching map $f\:X_{[0,\infty)}\to N_{[0,\infty)}$, there exist open 
neighborhoods $M_i$ of $X_i$ in $M$ such that $\dots\subset M_1\subset M_0$ and $f$ 
extends to an $X$-$Y$-approaching map $M_{[0,\infty)}\to N_{[0,\infty)}$.
\end{lemma}

\begin{proof} Since each $N_i$ is an ANR, there exist open neighborhoods $U_i$ of $X_i$ in $M$ 
such that $\dots\subset U_1\subset U_0$ and $f$ extends to a map $F\:U_{[0,\infty)}\to N_{[0,\infty)}$.
By Lemma \ref{graph-approximation} for each $i$ there exists an open neighborhood $M_i$ of $X_i$ 
in $U_i$ such that for each $p\in M_i$ there exists an $x\in X_i$ such that $d(p,x)<\frac1i$ and 
$d\big(f(p),f(x)\big)<\frac1i$.
We may assume that $\dots\subset M_1\subset M_0$.
Then for each sequence $(p_i)$ in $M_{[0,\infty)}$ such that $d(p_i,X)\to 0$ there exists 
a sequence $(x_i)$ in $X_{[0,\infty)}$ such that $d(p_i,x_i)\to 0$ and $d\big(F(p_i),f(x_i)\big)\to 0$.
If $(p_i)$ has a cluster point in $X$, then so does $(x_i)$; hence $\big(f(x_i)\big)$ has a cluster point 
in $Y$, and therefore so does $\big(F(p_i)\big)$.
Thus the restriction of $F$ to $M_{[0,\infty)}$ is $X$-$Y$-approaching.
\end{proof}

Let us consider the {\it approaching category} whose objects are pairs of the form $(M,X)$, where $M$ is 
a metrizable space and $X$ its closed homotopy negligible subset, and whose morphisms $(M,X)\to(N,Y)$ are
the elements of $[X,Y]_{MN}$ (that is, $X$-$Y$-approaching homotopy classes of $X$-$Y$-approaching maps 
$M\but X\to N\but Y$).

\begin{theorem} \label{negligible}
(a) There is a functor $\F^\Ap_\Fish$ from the approaching category to the fine shape category
which acts on objects by $(M,X)\mapsto X$.

(b) Moreover, the following diagram commutes:
\[\begin{CD}
[(M,X),\,(N,Y)]@>\text{\rm restriction}>>[X,Y]\\
@VVV@VVV\\
[X,Y]_{MN}@>\F^\Ap_\Fish>>[X,Y]_{\Fish}.
\end{CD}\]
\end{theorem}

\begin{proof} For each object $(M,X)$ of the approaching category we choose once and forever 
an absolute retract $U$ containing $M$ as a Z-set and a metric on $U$.
Our construction of $\F^\Ap_\Fish$ will use this choice; in fact $\F^\Ap_\Fish$
itself does not depend on this choice, but it takes some extra work to prove this 
(see Remark \ref{choices} below).

We need to show that for any objects $(M,X)$ and $(N,Y)$ of the approaching category 
the composition $[(M,X),\,(N,Y)]\to [X,Y]\to [X,Y]_\Fish$ factors through $[X,Y]_{MN}$, 
and moreover it does so in a way that respects composition of morphisms and the identity morphisms.

Thus we are given absolute retracts $U$ and $V$ containing $M$ and $N$ as Z-sets, and we are given
some metrics on $U$ and $V$.
For $i>0$ let $X_i$ and $Y_i$ be the closed $\frac1i$-neighborhoods of $X$ in $M$ and of $Y$ in $N$,
and let $X_0=X$ and $Y_0=Y$.
For $i>1$ let $V_i$ be the open $\frac1{i-1}$-neighborhood of $Y$ in $V$, and let $V_0=V_1=V$.
Thus each $V_i$ is an ANR (see \cite{M00}*{Lemma \ref{book:ANR-basic}(c)}) and 
$Y_i$ is its closed subset.

Given an $X$-$Y$-approaching map $M\but X\to N\but Y$, by Lemma \ref{X-X-telescope} it yields 
an $X$-$Y$-approaching map $f\:X_{[0,\infty)}\to Y_{[0,\infty)}$ which is well-defined up to
$X$-$Y$-approaching homotopy.
By Lemma \ref{approaching extension} there exist open neighborhoods $U^f_i$ of $X_i$ in $U$ 
such that $\dots\subset U^f_1\subset U^f_0$ and the composition 
$X_{[0,\infty)}\xr{f}Y_{[0,\infty)}\subset V_{[0,\infty)}$ extends to an $X$-$Y$-approaching map 
$F\:U^f_{[0,\infty)}\to V_{[0,\infty)}$.
Since $V_0$ is an absolute retract, we may assume that $U^f_0=U$.
Since $U$ is an ANR, so is each $U^f_i$.
Consequently $U^f_{[0,\infty]}$ and $V_{[0,\infty]}$ are ANRs containing $X$ and $Y$ as Z-sets 
(see \cite{M00}*{Corollary \ref{book:extended-ANR}}).
In fact they are absolute retracts, since they deformation retract onto $U$ and $V$, 
which are contractible (see \cite{M00}*{Corollary \ref{book:contractible ANR}}).
Then $F$ determines a fine shape morphism $X\to Y$.

Given another $X$-$Y$-approaching map $g\:X_{[0,\infty)}\to Y_{[0,\infty)}$ which is 
$X$-$Y$-approaching homotopic to $f$, we similarly obtain a mapping telescope 
$U^g_{[0,\infty)}$ and an $X$-$Y$-approaching map $G\:U^g_{[0,\infty)}\to V_{[0,\infty)}$ 
extending the composition $X_{[0,\infty)}\xr{g}Y_{[0,\infty)}\subset V_{[0,\infty)}$.
Moreover, the given $X$-$Y$-approaching homotopy $h_t\:X_{[0,\infty)}\to Y_{[0,\infty)}$
between $f$ and $g$ similarly yields 
a $U^h_{[0,\infty)}\subset U^f_{[0,\infty)}\cap U^g_{[0,\infty)}$ and 
an $X$-$Y$-approaching homotopy $H_t\:U^h_{[0,\infty)}\to V_{[0,\infty)}$ extending 
the composition $X_{[0,\infty)}\xr{h_t}Y_{[0,\infty)}\subset V_{[0,\infty)}$ and such
that $H_0=F|_{U^h_{[0,\infty)}}$ and $H_1=G|_{U^h_{[0,\infty)}}$.
Thus $H_0$ and $H_1$ represent the same fine shape morphism $X\to Y$.
On the other hand, the inclusion $U^h_{[0,\infty)}\to U^f_{[0,\infty)}$ extends to 
the inclusion $U^h_{[0,\infty]}\to U^f_{[0,\infty]}$ and hence represents 
the fine shape morphism $[\id_X]$.
Similarly, the inclusion $U^h_{[0,\infty)}\to U^g_{[0,\infty)}$ also represents $[\id_X]$.
Hence $F$ and $G$ represent the same fine shape morphism $X\to Y$.

Thus we have constructed a map $\F_{XY}\:[X,Y]_{MN}\to [X,Y]_{\Fish}$.
The previous argument with $f=g$ shows that it does not depend on the choice of 
the neighborhoods $U^f_i$.
It follows from the definitions that the diagram
\[\begin{CD}
[(M,X),\,(N,Y)]@>>>[X,Y]\\
@VVV@VVV\\
[X,Y]_{MN}@>\F_{XY}>>[X,Y]_{\Fish}.
\end{CD}\]
commutes.
This implies in particular that $\F_{XX}$ sends $[\id_{M\but X}]\in [X,X]_{MM}$
to $[\id_X]\in[X,X]_\Fish$.
It is clear from the construction of $\F_{XY}$ that the diagram
\[\begin{CD}
[X,Y]_{MN}\x [Y,Z]_{NL}@>\circ>>[X,Z]_{ML}\\
@VV\F_{XY}\x\F_{YZ}V@VV\F_{XZ}V\\
[X,Y]_{\Fish}\x [Y,Z]_{\Fish}@>\circ>>[X,Z]_{\Fish}
\end{CD}\]
commutes.
\end{proof}

\begin{remark} \label{choices}
Let us show that the construction of the map $[X,Y]_{MN}\to [X,Y]_{\Fish}$
in the proof of Theorem \ref{negligible} does not depend on the choices of $U$, $V$
and their metrics.

Indeed, suppose that we are given a $U'$ and a $V'$, resulting in $X_i'$, $Y_i'$, $V'_i$
and an $f'\:X'_{[0,\infty)}\to Y'_{[0,\infty)}$, which in turn produces some $U^{f'}$
an an $F'\:U^{f'}_{[0,\infty)}\to V'_{[0,\infty)}$.
It follows from Lemma \ref{X-X-telescope} that there exist an $X$-$X$-approaching homotopy 
equivalence $\phi\:X'_{[0,\infty)}\to X_{[0,\infty)}$ and a $Y$-$Y$-approaching homotopy 
equivalence $\psi\:Y'_{[0,\infty)}\to Y_{[0,\infty)}$ such that the diagram
\[\begin{CD}
X'_{[0,\infty)}@>f'>>Y'_{[0,\infty)}\\
@V\phi VV@V\psi VV\\
X_{[0,\infty)}@>f>>Y_{[0,\infty)}
\end{CD}\tag{$*$}\]
commutes up to $X$-$Y$-approaching homotopy.
By the proof of Lemma \ref{X-X-telescope} $\phi$ and $\psi$ extend by $\id_X$ and $\id_Y$
to maps $\bar\phi\:X'_{[0,\infty]}\to X_{[0,\infty]}$ and 
$\bar\psi\:Y'_{[0,\infty]}\to Y_{[0,\infty]}$.

By Lemma \ref{approaching extension} there exist open neighborhoods $U^\phi_i$ of $X'_i$ 
in $U'$ such that $\dots\subset U^\phi_1\subset U^\phi_0$ and the composition 
$X'_{[0,\infty)}\xr{\phi}X_{[0,\infty)}\subset U^f_{[0,\infty)}$ extends to 
an $X$-$X$-approaching map $\Phi\:U^\phi_{[0,\infty)}\to U^f_{[0,\infty)}$.
Since $U^f_0=U$ is an absolute retract, we may assume that $U^\phi_0=U'$.
Since $U'$ is an ANR, so is each $U^\phi_i$.
Moreover, it is clear from the proof of Lemma \ref{approaching extension} that
the map $\Phi\cup\bar\phi\:U^\phi_{[0,\infty]}\to U^f_{[0,\infty]}$ is continuous.
Hence $\Phi$ represents the fine shape morphism $[\id_X]$.
Similarly we obtain a $U^\psi_{[0,\infty)}$ and a $Y$-$Y$-approaching map 
$\Psi\:U^\psi_{[0,\infty)}\to V_{[0,\infty)}$ representing the fine shape morphism $[\id_Y]$
and extending the composition $Y'_{[0,\infty)}\xr{\psi}Y_{[0,\infty)}\subset V_{[0,\infty)}$.

In the same fashion we get a $U^{\psi f'}_{[0,\infty)}$ and 
an $X$-$Y$-approaching map $\tilde F'\:U^{\psi f'}_{[0,\infty)}\to U^\psi_{[0,\infty)}$ 
extending the composition
$Y'_{[0,\infty)}\xr{f'}X'_{[0,\infty)}\subset U^\psi_{[0,\infty)}$.
We may assume that $U^{\psi f'}_{[0,\infty)}\subset U^{f'}_{[0,\infty)}$ and 
by inspecting the proof of Lemma \ref{approaching extension} we may choose $\tilde F'$ 
to be the restriction of $F'$.
Since the inclusions $U^{\psi f'}_{[0,\infty)}\subset U^{f'}_{[0,\infty)}$ and
$U^\psi_{[0,\infty)}\subset V'_{[0,\infty)}$ extend to inclusions
$U^{\psi f'}_{[0,\infty]}\subset U^{f'}_{[0,\infty]}$ and
$U^\psi_{[0,\infty]}\subset V'_{[0,\infty]}$, they represent the fine shape morphisms
$[\id_X]$ and $[\id_Y]$, respectively.
Hence $F'$ and $\tilde F'$ represent the same fine shape morphism $X\to Y$.

On the other hand, since the two compositions in ($*$) are $X$-$Y$-approaching homotopic, 
our previous argument shows that the compositions 
$U^{\psi f'}_{[0,\infty)}\xr{\tilde F'} U^\psi_{[0,\infty)}\xr{\Psi} V_{[0,\infty)}$
and $U^\phi_{[0,\infty)}\xr{\Phi} U^f_{[0,\infty)}\xr{F}V_{[0,\infty)}$
represent the same fine shape morphism.
Since $\Phi$ represents $[\id_X]$ and $\Psi$ represents $[\id_Y]$, we conclude that
$F$ represents the same fine shape morphism $X\to Y$ as $\tilde F'$, and hence also as $F'$.
\end{remark}

Theorem \ref{negligible}(a) immediately implies

\begin{corollary} \label{negligible2}
Let $M$ and $N$ be metrizable spaces and let $X$ and $Y$ be their closed 
homotopy negligible subsets.
If $M\but X$ and $N\but Y$ are $X$-$Y$-approaching homotopy equivalent, then $X$ and $Y$ 
are fine shape equivalent.
\end{corollary}

Let us consider the {\it germ-approaching category} whose objects are pairs of the form $(M,X)$, where $M$ is 
a metrizable space and $X$ its closed homotopy negligible subset, and whose morphisms $(M,X)\to(N,Y)$ are
the elements of $[X,Y]^\infty_{MN}$ (see \S\ref{germs}).

\begin{corollary}
There is a functor $\F^\Gap_\Fish$ from the germ-approaching category to the fine shape category,
which acts on objects by $(M,X)\mapsto X$.
\end{corollary}

\begin{proof}
By Theorem \ref{negligible}(b) $\F^\Ap_\Fish$ sends $[(\id_X)_{VU}]\in [X,X]_{UV}$ into 
$[\id_X]\in[X,X]_\Fish$, and hence commutes with the restriction map 
$r_{UV}\:[X,Y]_{UN}\to [X,Y]_{VN}$.
Then the universal property of colimit yields a map $\F_{XY}\:[X,Y]^\infty_{MN}\to[X,Y]_\Fish$.
It is easy to check that these maps $\F_{XY}$ form a functor.
\end{proof}

By elaborating on the proof of Theorem \ref{negligible} we also obtain the following more precise version of 
Propositions \ref{representation} and \ref{representation3}.

\begin{theorem} \label{representation2}
Let $M$, $N$ be metrizable spaces and $X$, $Y$ their closed homotopy negligible subsets.

(a) If $N$ is an absolute retract, then $\F^\Ap_\Fish\:[X,Y]_{MN}\to[X,Y]_\Fish$ is a bijection.

(b) If $N$ is an ANR, then $\F^\Gap_\Fish\:[X,Y]_{MN}^\infty\to[X,Y]_\Fish$ is a bijection.
\end{theorem}

\begin{proof}[Proof. (a)] We will assume familiarity with the construction of $\F^\Ap_\Fish$ in 
the proof of Theorem \ref{negligible}.
By Remark \ref{choices} we may assume that $V=N\x I$ with the $l_\infty$ product metric, 
where $N$ is identified with $N\x\{0\}$.
The projection $V=N\x I\to N$ sends each $V_i$, $i\ge 1$, into $Y_{i-1}$, and $V_0$ into $Y_0$.
This yields a map $V_{[0,\infty]}\to Y_{[0,\infty]}$ keeping $X$ fixed, which is easily seen
to be homotopy inverse, keeping $X$ fixed, to the inclusion $Y_{[0,\infty]}\to V_{[0,\infty]}$.
Hence the inclusion $j\:Y_{[0,\infty)}\to V_{[0,\infty)}$ is a $Y$-$Y$-approaching homotopy equivalence.
Therefore the map $j_*\:[X,Y]_{X_{[0,\infty]},Y_{[0,\infty]}}\to[X,Y]_{X_{[0,\infty]},V_{[0,\infty]}}$
is a bijection.

On the other hand, for $i>1$ let $U_i$ be the open $\frac1{i-1}$-neighborhood of $X$ in $U$, and 
let $U_0=U_1=U$.
Then $U_{[0,\infty]}$ is an absolute retract containing $X$ as a Z-set (by repeating the proof that 
$V_{[0,\infty]}$ is an absolute retract containing $Y$ as a Z-set) and also each $X_i$ is a closed subset
of $U_i$.
The inclusion $i\:X_{[0,\infty)}\to U_{[0,\infty)}$ induces a map
$i^*\:[X,Y]_{U_{[0,\infty)},V_{[0,\infty)}}\to[X,Y]_{X_{[0,\infty)},V_{[0,\infty)}}$.
By arguing as in the proof of Theorem \ref{negligible} it is not hard to show that the composition
\[[X,Y]_\Fish\xr{\simeq}[X,Y]_{U_{[0,\infty]},V_{[0,\infty]}}\xr{i^*}
[X,Y]_{X_{[0,\infty]},V_{[0,\infty]}}\xr{(j_*)^{-1}}[X,Y]_{X_{[0,\infty]},Y_{[0,\infty]}}\xr{\simeq} [X,Y]_{MN}\]
is inverse to $\F^\Ap_\Fish$.
\end{proof}

\begin{proof}[(b)] Along the lines of the proof of (a).
\end{proof}

\subsection{The graph of an approaching map}

Let $X$ and $Y$ be closed subsets of metrizable spaces $M$ and $N$ and let $f\:M\but X\to N\but Y$
be an $X$-$Y$-approaching map.
Let $\bar\Gamma_f$ be the closure of the graph 
$\Gamma_f\bydef \{(x,y)\in(M\but X)\x(N\but Y)\mid y=f(x)\}$ in $M\x N$.

\begin{lemma} \label{graph}
$\bar\Gamma_f\but\Gamma_f\subset X\x Y$.
\end{lemma}

\begin{proof}
Given an $(x,y)\in\bar\Gamma_f$, it is the limit of a sequence of pairs 
$\big(x_i,f(x_i)\big)\in\Gamma_f$.
Then $x=\lim x_i$ and $y=\lim f(x_i)$.
If $x\in M\but X$, then $y=f(x)$ since $f$ is continuous.
Suppose that $x\in X$.
Since $f$ is $X$-$Y$-approaching, $\big(f(x_i)\big)$ has a cluster point in $Y$, and therefore $y\in Y$.
\end{proof}

\begin{lemma} \label{perfect}
The projection $\phi\:\bar\Gamma_f\to M$ is a perfect (in particular, closed) map.
\end{lemma}

\begin{proof}
By \cite{Sak2}*{Proposition 2.1.6} to show that $\phi$ is a perfect map it suffices
to show that any sequence of pairs $(x_i,y_i)\in\bar\Gamma_f$ such that $(x_i)$ is convergent 
has a convergent subsequence.
Thus let $(x_i,y_i)\in\bar\Gamma_f$ be such that $(x_i)$ converges to some $x\in M$.
For each $i$ let $\big(x_{ij},f(x_{ij})\big)\in\Gamma_f$ be a sequence converging to $(x_i,y_i)$.
Let us fix some metrics on $M$ and $N$, and let $j_i$ be such that 
$d(x_i,x_{ij_i})<\frac1i$ and $d\big(y_i,f(x_{ij_i})\big)<\frac1i$.
Then $x_{ij_i}\to x$.
Since $f$ is $X$-$Y$-approaching, $\big(f(x_{ij_i})\big)$ has a cluster point $y$.
Then it has a subsequence converging to $y$.
Then $(y_i)$ also has a subsequence $(y_{i_k})$ converging to $y$.
Then $(x_{i_k},y_{i_k})\to (x,y)$.
Thus $\phi$ is perfect.
\end{proof}

\begin{remark} \label{relations} (Not used in the sequel.)

(a) By Lemma \ref{perfect} the projection $\phi\:\bar\Gamma_f\to M$ is a closed map, 
so the multi-valued map $\phi^{-1}\:M\To\bar\Gamma_f$ is continuous.%
\footnote{If $A$ and $B$ are spaces, a multi-valued map $F\:A\To B$ is called {\it continuous} (or upper semi-continuous) 
if the preimage $F^{-1}(C)=\{a\in A\mid F(a)\cap C\ne\emptyset\}$ of every closed subset $C$ of $B$ is closed.
This is equivalent to saying that given an $a\in A$ and a neighborhood $U$ of $F(a)$,
there exists a neighborhood $V$ of $a$ such that $F(V)\subset U$.}
Then its composition with the projection $\psi\:\bar\Gamma_f\to N$ is also continuous.
Thus we obtain a continuous multi-valued map $\bar f\:M\To N$.
By Lemma \ref{graph} it restricts to a multi-valued map $X\To Y$.

(b) Moreover, the composition of $\phi^{-1}$ with the inclusion map $\bar\Gamma_f\to M\x N$ is also continuous.
It follows that $\bar f$ is strongly continuous.%
\footnote{A multi-valued map $F\:A\To B$ is called {\it strongly continuous} if the map $\gamma_F\:A\to A\x B$, 
defined by $\gamma_F(a)=\{a\}\x F(a)$, is continuous.
According to \cite{Can}*{Theorem A.3}, a multi-valued map $F\:A\To B$ is strongly continuous if and only if whenever
a sequence of points $a_i\in A$ converges to an $a\in A$, and each $b_i\in F(a_i)$, then the sequence $(b_i)$
has a cluster point in the set $F(a)$.
In particular, if $F$ has compact images of points, then it is strongly continuous.}

(c) Still more can be said. 
By Lemma \ref{perfect} $\phi$ is perfect, so $\phi^{-1}\:M\To\bar\Gamma_f$ has compact point images. 
Therefore so does $\bar f$.
One consequence of this is that $\bar f(K)$ is compact for every compact $K\subset M$ \cite{Mich}*{Corollary 9.6}.
\end{remark}

\subsection{Map excision: sufficiency} \label{mex-section}

Let us recall that a (co)homology theory $H$ on the category of closed pairs of metrizable spaces 
satisfies the {\it map excision axiom} if every closed map $f\:(X,A)\to(Y,B)$ between such pairs 
that restricts to a homeomorphism between $X\but A$ and $Y\but B$, induces for each $n$ 
an isomorphism $H_n(X,A)\simeq H_n(Y,B)$ (respectively, $H^n(X,A)\simeq H^n(Y,B)$).

We say that $H$ is {\it fine shape invariant} if its restriction to single spaces 
(i.e.\ pairs of the form $(X,\emptyset)$), regarded as a covariant (contravariant) functor 
from the homotopy category of metrizable spaces to the category of graded abelian groups, 
factors through the fine shape category.

\begin{theorem} \label{mex-fish} 
Let $H$ be a (co)homology theory on closed pairs of metrizable spaces.
If $H$ satisfies the map excision axiom, then it is fine shape invariant.
\end{theorem}

\begin{proof}
We consider only the case of homology, the case of cohomology being similar.
Let $X$ and $Y$ be metrizable spaces, contained as $Z$-sets in absolute retracts $M$ and $N$,
and let a fine shape morphism $X\to Y$ be represented by an $X$-$Y$-approaching map 
$f\:M\but X\to N\but Y$.
Let $\Gamma_f$ be the graph of $f$ and let $\bar\Gamma_f$ be its closure in $M\x N$.
By Lemma \ref{graph} $V_f\bydef \bar\Gamma_f\but\Gamma_f$ lies in $X\x Y$.
The projections of $M\x N$ onto its factors restrict to maps $\phi\:(\bar\Gamma_f,V_f)\to(M,X)$
and $\psi\:(\bar\Gamma_f,V_f)\to(N,Y)$.
By Lemma \ref{perfect} $\phi$ is closed.

Let us pick some $x\in M\but X$, and let us write $X^+=X\cup\{x\}$, $Y^+=Y\cup\{f(x)\}$ 
and $V^+=V_f\cup\{\big(x,f(x)\big)\}$.
By the map excision axiom $\phi_*\:H_n(\bar\Gamma_f,V_f^+)\to H_n(M,X^+)$ 
is an isomorphism for each $n$.
Since $H_i(M,x)=0$ for all $i$, we have $H_n(M,X^+)\simeq H_{n-1}(X^+,x)\simeq H_{n-1}(X)$;
and similarly $H_n(N,Y_+)\simeq H_{n-1}(Y)$.
Hence the composition
$H_n(M,X^+)\xr{\phi_*^{-1}} H_n(\bar\Gamma_f,V_f^+)\xr{\psi_*}H_n(N,Y^+)$
yields a map $f_*\:H_{n-1}(X)\to H_{n-1}(Y)$.

Let $H\:(M\but X)\x I\to N\but Y$ be an $X$-$Y$-approaching homotopy between $f$ and $f'$.
Since $H$ is an $X\x I$-$Y$-approaching map, by repeating the previous arguments we obtain
a closed map $\Xi\:(\bar\Gamma_H,V_H)\to (M,\,X\x I)$.
Let $V_H^+=V_H\cup\{\big((x,t),H(x,t)\big)\mid t\in I\}$.
By the map excision axiom $\Xi_*\:H_n(\bar\Gamma_H,V_H^+)\to H_n(M\x I,\,X^+\x I)$ 
is an isomorphism.
Since the homomorphism $\iota_i\:H_n(M,X^+)\to H_n(M\x I,\,X^+\x I)$ induced by the inclusion
$\iota_i\:M=M\x\{i\}\subset M\x I$ is an isomorphism for $i=0,1$, we get that the
inclusion induced homomorphisms $H_n(\bar\Gamma_f,V_f^+)\to H_n(\bar\Gamma_H,V_H^+)$
and $H_n(\bar\Gamma_{f'},V_{f'}^+)\to H_n(\bar\Gamma_H,V_H^+)$ are isomorphisms.
Then it follows that $f_*=f'_*$.
Thus we may denote $f_*$ by $[f]_*$, where $[f]\:X\to Y$ is the fine shape morphism
represented by $f$.

If the $X$-$Y$-approaching map $f$ has a continuous extension $\bar f\:M\to N$, then it is easy
to see that $\bar\Gamma_f=\Gamma_{\bar f}$. 
It follows that $[f]_*=F_*$, where $F\:X\to Y$ is the restriction of $\bar f$.

It remains to show that what we have defined is a functor (from the fine shape category to
the category of graded abelian groups).
It is easy to see that $[\id]_*=\id$. 
Given a $Y$-$Z$-approaching map $g\:N\but Y\to L\but Z$, let us show that $[gf]_*=[g]_*[f]_*$.
Let $\Delta_{f,g}=\{(x,y,z)\in(M\but X)\x(N\but Y)\x(L\but Z)\mid y=f(x),\,z=g(y)\}$ and let 
$\bar\Delta_{f,g}$ be the closure of $\Delta_{f,g}$ in $M\x N\x L$.
Let $W_{f,g}=\bar\Delta_{f,g}\but\Delta_{fg}$ and $W_{f,g}^+=W_{fg}\cup\{\big(x,f(x),gf(x)\big)\}$.
Then the commutative diagram on the left induces the commutative diagram on the right
\[\begin{tikzcd}[row sep=0.5em,column sep=0em,cramped]
X & & X\x Z \ar[ll] \ar[rd] & \\
 & & & Z \\
X\x Y \ar[uu] \ar[rd] & & X\x Y\x Z 
\ar[uu] \ar[ll] \ar[rd] & \\
 & Y & & Y\x Z \ar[uu] \ar[ll]
\end{tikzcd}
\hspace{20pt}
\begin{tikzcd}[row sep=0.5em,column sep=0em,cramped]
H_n(M,X^+) & & H_n(\bar\Gamma_{gf},V_{gf}^+) \ar[ll, "\simeq"'] \ar[rd] & \\
 & & & H_n(L,Z^+) \\
H_n(\bar\Gamma_f,V_f^+) \ar[uu, "\simeq"] \ar[rd] & & H_n(\bar\Delta_{f,g},W_{f,g}^+) 
\ar[uu, "\simeq"] \ar[ll, "\simeq"'] \ar[rd] & \\
 & H_n(N,Y^+) & & H_n(\bar\Gamma_g,V_g^+) \ar[uu] \ar[ll, "\simeq"']
\end{tikzcd}\]
and the assertion follows.
\end{proof}

As a byproduct of the proof of Theorem \ref{mex-fish} we have

\begin{theorem} \label{mex-supports}
Let $H$ be a (co)homology theory on closed pairs of metrizable spaces satisfying 
the map excision axiom.
Suppose that $(X,A)$ and $(Y,B)$ are closed pairs of metrizable spaces and there exists 
a homeomorphism $h\:X\but A\to Y\but B$ such that $h$ is an $A$-$B$-approaching map
and $h^{-1}$ is a $B$-$A$-approaching map.
Then $H_*(X,A)\simeq H_*(Y,B)$ (resp.\ $H^*(X,A)\simeq H^*(Y,B)$).
\end{theorem}

\begin{proof}
Let $\bar\Gamma_h$ be the closure of the graph 
$\Gamma_h\bydef \{(x,y)\in(X\but A)\x(Y\but B)\mid y=h(x)\}$ in $X\x Y$.
Then by the proof of Theorem \ref{mex-fish}
$C\bydef \bar\Gamma_h\but\Gamma_h$ lies in $A\x B$ and the restrictions
$\phi\:\bar\Gamma_h\to X$ and $\psi\:\bar\Gamma_h\to Y$ of the projections of $X\x Y$ 
onto its factors are perfect maps.
Hence $\phi_*\:H_n(\bar\Gamma_h,C)\to H_n(X,A)$ and $\psi_*\:H_n(\bar\Gamma_h,C)\to H_n(Y,B)$
are isomorphisms for each $n$ by the map excision axiom, and similarly for cohomology.
\end{proof}

\subsection{Map excision: necessity}

Now we turn to the converse of Theorem \ref{mex-fish}.

For a map $f\:X\to Y$ we denote by $MC(f)$ its metric mapping cylinder (see \cite{M00}*{\S\ref{book:mmc}}).
For a subset $Z\subset Y$, we denote by $f_Z$ the restriction $f^{-1}(Z)\to Z$ of $f$ (which should not be 
confused with $f|_{f^{-1}(Z)}\:f^{-1}(Z)\to Y$ as they have different metric mapping cylinders).

\begin{lemma} \label{fish-MC} Let $f\:X\to Y$ be a perfect map between metrizable spaces
and let $A\subset X$ and $B\subset Y$ be closed subsets such that $f(A)\subset B$ and 
$f$ restricts to a homeomorphism $X\but A\to Y\but B$.
Then the composition $\phi\:X\cup MC(f_B)\subset MC(f)\xr{\pi} Y$ is a fine shape equivalence.
\end{lemma}

Let us note that since $f$ is perfect, the topology of $MC(f)$ coincides with the quotient topology
(see \cite{M00}*{Remark \ref{book:perfect mc}}).

\begin{proof} Let us write $\hat X=X\cup MC(f_B)$.
For $y\in Y\but B$ let $y^*\in X\but A$ denote the unique element of $f^{-1}(y)$.
Let us note that $\phi$ has a set-theoretic section, continuous as a map
$s\:B\sqcup (Y\but B)\to\hat X$, which is defined by $s(y)=y$ if $y\in B$ and 
by $s(y)=y^*$ if $y\in Y\but B$.
Although $s$ is discontinuous as a map $Y\to\hat X$, we will show that it extends
to a $Y$-$\hat X$-approaching map.

Let $F$ and $\bar F$ denote respectively the maps $f\x\id_{[1,\infty)}\:X\x[1,\infty)\to Y\x[1,\infty)$
and $f\x\id_{[1,\infty]}\:X\x[1,\infty]\to Y\x[1,\infty]$.
Let us fix some metric on $Y$ and let us write $V=\{(y,t)\in Y\x[1,\infty)\mid d(y,B)\le 1/t\}$ and
$\bar V=V\cup B\x\{\infty\}\subset Y\x[1,\infty]$.
Upon identifying $X$ and $Y$ with $X\x\{\infty\}$ and $Y\x\{\infty\}$, let us note that 
$Y$ is a closed homotopy negligible subset of $Y\x [1,\infty]$ and $\hat X$ 
is a closed homotopy negligible subset of $X\x[1,\infty]\cup MC(\bar F_{\bar V})$.
The composition 
\[\Phi\:X\x[1,\infty)\cup MC(F_V)\subset MC(F)\xr{\pi\x\id_{[1,\infty)}} Y\x[1,\infty)\]
is an $\hat X$-$Y$-approaching map continuously extending $\phi$.
On the other hand $\Phi$ has a section $S\:Y\x[1,\infty)\to X\x[1,\infty)\cup MC(F_V)$, defined by 
\[S(y,t)=\begin{cases}
(y,t)\in B\x[1,\infty)\subset MC(F_V)&\text{ if }y\in B,\\
\big(y^*,t,\,d(y,B)/t\big)\in F^{-1}(V)\x (0,1]\subset MC(F_V)&\text{ if }y\notin B\text{ and }d(y,B)\le t,\\
(y^*,t)\in X\x[1,\infty)&\text{ if }d(y,B)\ge t.
\end{cases}\]
Let us note that $s$ provides a continuous extension of $S$.
Given a sequence of points $(y_i,t_i)\in (Y\but B)\x[1,\infty)$ converging to a $y\in B$, 
it is easy to see that all cluster points of the sequence $\big(S(y_i,t_i)\big)$ lie 
in $MC(f_{\{y\}})$.
Writing $\Sigma=\{(y_i,t_i)\mid i\in\N\}$ and $\Sigma^+=\Sigma\cup\{y\}$, 
we get that $S(\Sigma)\cup MC(f_{\{y\}})$ is a closed subset of $MC(\bar F_{\Sigma^+})$.
Since $\Sigma^+$ is compact and $f$ is perfect, $MC(\bar F_{\Sigma^+})$ is compact.
Hence $S(\Sigma)\cup MC(f_{\{y\}})$ is also compact, and therefore the sequence 
$\big(S(y_i,t_i)\big)$ has a cluster point in $MC(f_{\{y\}})\subset\hat X$.
It follows that $S$ is a $Y$-$\hat X$-approaching map.

Next, it is easy to see that $X\x[1,\infty)\cup MC(F_V)$ strong deformation retracts onto
the image of $S$ by a homotopy $h_t$ that keeps $MC(F_{\{v\}})$ within itself for each $v\in V$.
Arguing like before, it is easy to show that $h_t$ is a $\hat X$-$\hat X$-approaching homotopy.
Then by Corollary \ref{negligible2} $Y$ and $\hat X$ are fine shape equivalent, and more
specifically it follows from Theorem \ref{negligible} that $\phi$ is a fine shape equivalence.
\end{proof}

\begin{theorem}\label{fs-me}
Let $H$ be a (co)homology theory on closed pairs of metrizable spaces.
If $H$ is fine shape invariant, then it satisfies the map excision axiom.
\end{theorem}

\begin{proof} Let $f\:(X,A)\to(Y,B)$ be a closed map that restricts to a homeomorphism between $X\but A$ 
and $Y\but B$.
We need to show that $f$ induces isomorphisms on the theory $H$.
By \cite{M00}*{Lemma \ref{book:perfect}} we may assume without loss of generality that $f$ is perfect.
Now $f$ factors as $(X,A)\xr{i}\big(X\x\{0\}\cup A\x I,\,A\x I\big)\xr{j}
\big(X\cup MC(f_B),\,MC(f_B)\big)\xr{\phi}(Y,B)$, where $i$ is the obvious inclusion, 
$j$ is an embedding onto a ``half'' of the metric mapping cylinder, and $\phi$ is 
the restriction of the standard map $\pi\:MC(f)\to Y$.
Now $i$ is a homotopy equivalence of pairs, and so induces isomorphisms on $H$ by 
the homotopy axiom; and $j$ induces isomorphisms on $H$ by the usual excision axiom.
By Lemma \ref{fish-MC} $\phi\:X\cup MC(f_B)\to Y$ is a fine shape equivalence.
Then by the hypothesis it induces isomorphisms on $H$.
On the other hand, the restriction $MC(f_B)\to B$ of $\phi$ is clearly a homotopy equivalence, 
and so induces isomorphisms on $H$ by the homotopy axiom.
Hence by the $5$-lemma $\phi\:\big(X\cup MC(f_B),\,MC(f_B)\big)\to(Y,B)$ also induces isomorphisms 
on $H$.
\end{proof}

\subsection{Fine Z-sets}

In order to establish some results (Theorem \ref{main1a}(b), Corollary \ref{homology}, Theorem \ref{ash-surj}), 
it will be convenient to work with ``fine'' or ``semi-fine'' Z-sets.

We say that a subset $X$ of an absolute retract $M$ is {\it a semi-fine Z-set} in $M$ if $X$ is a Z-set 
in $M$ and each compact subset $K\subset X$ lies in a compact absolute retract $M_K\subset M$ such that 
$M_K\cap X=K$ and $K$ is a Z-set in $M_K$.

\begin{lemma} \label{fineZ0} 
Every metrizable space $X$ embeds as a semi-fine Z-set in some absolute retract $M$.
If $X$ is completely metrizable and/or separable, then so is $M$.
\end{lemma}

\begin{proof} If $X$ is a singleton, it suffices to consider $M=[0,1]$, where $X$ 
is identified with $\{0\}$.
Thus we may assume that $X$ contains at least two points.
Then $X$ is a Z-set in the space $P(X)$ of probability measures, which is an absolute retract;
moreover, if $X$ is completely metrizable and/or separable, then so is $P(X)$; and for every compact subset 
$K\subset X$ the space $P(K)$ is a compact absolute retract which lies in $P(X)$ and meets $X$ in $K$
(see \cite{M00}*{Theorem \ref{book:prob}}).
\end{proof}

We say that a subset $X$ of an absolute retract $M$ is {\it a fine Z-set} in $M$ if $X$ is a Z-set in $M$ and 
each compact subset $K\subset M$ lies in a compact absolute retract $M_K\subset M$ such that $M_K\cap X=K\cap X$ 
and $M_K\cap X$ is a Z-set in $M_K$.
Let us note that for the empty set to be a fine Z-set in a given absolute retract $M$ is a non-vacuous condition on $M$.

\begin{lemma} \label{fineZ}
Every separable metrizable space $X$ embeds as a fine Z-set in some separable 
absolute retract $M$.
Moreover, if $X$ is completely metrizable or locally compact, then so is $M$.
\end{lemma}

\begin{proof} Let $\bar X$ be the completion of $X$ in some metric.
If $X$ is completely metrizable, then we may assume that $\bar X=X$.
If $X$ is a local compactum, then it is completely metrizable, so in this case also $\bar X=X$.

By Isbell's theorem $\bar X$ is homeomorphic to the limit of a scalable inverse sequence 
$\dots\xr{f_2}|K_2|\xr{f_1}|K_1|$ of finite dimensional separable locally compact polyhedra;
if $X$ is locally compact, then each $f_i$ may be assumed to be proper; moreover, 
there exist admissible subdivisions $K_i'$ of the simplicial complexes $K_i$ such that 
each $f_i$ is triangulated by a simplicial map $K_{i+1}\to K_i'$
(see \cite{M00}*{Theorem \ref{book:isbell}}).
By augmenting the inverse sequence we may assume that $|K_1|=pt$; in the case where $X$
locally compact we may assume that $|K_1|=[0,\infty)$ (and the bonding maps remain proper).
Then by \cite{M00}*{Theorem \ref{book:extended telescope}} the extended mapping telescope 
$|K|_{[0,\infty]}$ is a separable absolute retract and $\bar X$ is fine Z-set in it.
In the case where $X$ is locally compact, so is $|K|_{[0,\infty]}$.
Thus we are done in the locally compact and completely metrizable cases.

In the general case let $Y=\bar X\but X$.
Since $|K|_{[0,\infty)}$ is homotopy dense in $|K|_{[0,\infty]}$
(see \cite{M00}*{Proposition \ref{book:telescope retraction}}), so is 
$M\bydef |K|_{[0,\infty]}\but Y$.
Hence $M$ is an absolute retract (see \cite{M00}*{Theorem \ref{book:homotopy dense}}).
Since $\bar X$ is a fine Z-set in $|K|_{[0,\infty]}$, it is easy to see that $X$ 
is a fine Z-set in $M$.
\end{proof}

\begin{remark} \label{fineZ'}
(a) Every Z-set in $\R^\infty$, in the space $c_{00}$ of all eventually zero sequences with 
the $l_\infty$ metric, or in $c_{00}\x I^\infty$ is a fine Z-set in it 
\cite{M00}*{Corollary \ref{book:fineZset}(a)}.

(b) By (a) and \cite{M00}*{Proposition \ref{book:compactZset property}} every compact subset of
$\R^\infty$, of $c_{00}$ or of $c_{00}\x I^\infty$ is a fine Z-set in it.

(c) By (a) and Remark \ref{Z-embeddings}(b) every Polish space $X$ embeds as a fine Z-set in $\R^\infty$.

(d) If $P$ is a metrizable space of the form $P=\bigcup_i R_i$, where each $R_i$ is a compact 
absolute retract and each $R_i\subset\Int R_{i+1}$, then every closed subset of $P\x\{0\}$
is a fine Z-set in $P\x I$ \cite{M00}*{Corollary \ref{book:manifold boundary}}.
Hence we get the following from Remark \ref{Z-embeddings}(c)-(f):
\begin{itemize}
\item Every compactum embeds as a fine Z-set in the Hilbert cube $I^\infty$;
\item Every local compactum embeds as a fine Z-set in $I^\infty\x[0,\infty)$;
\item Every $n$-dimensional compactum embeds as a fine Z-set in $I^{2n+2}$;
\item Every $n$-dimensional local compactum embeds as a fine Z-set in $I^{2n+1}\x[0,\infty)$.
\end{itemize}
\end{remark}

\begin{remark}
Let $X$ be a fine Z-set in an absolute retract $M$.
Then 
\[\tau_M(X)\bydef \{P\but X\mid P\text{ is a compact AR in $M$ such that $P\cap X$ 
is a Z-set in $P$}\}\]
is a cofinal set in the directed family $\kappa_M(X)$.
In particular, $\tau_M(X)$ is itself a directed family of subsets of $M\but X$.

Since $\tau X$ is cofinal in $\kappa X$, a continuous map $f\:M\but X\to N\but Y$, where $X$ is a fine Z-set
in $M$ and $Y$ is a fine Z-set in $N$, is an $X$-$Y$-approaching map if and only if it is a
filtered map $(M\but X)_{\tau X}\to (N\but Y)_{\tau Y}$ and its restriction to every member of $\tau X$ is
a proper map.
Let us note, however, that $f_\star\:\tau X\to\tau Y$ is not monotone in general.
\end{remark}

\section{Comparison} \label{comparison}

\subsection{Antishape}

We refer to \cite{M1} for a thorough treatment of strong shape of compacta.

\begin{proposition} \label{compact-ss}
Let $X$ and $Y$ be compacta.
Then there is a bijection between the set of fine shape morphisms $X\to Y$ and the set of strong shape morphisms $X\to Y$ 
which respects composition of mophisms and the identity morphisms.
\end{proposition}

\begin{proof}
This is more or less of a tautology, depending on how one defines strong shape morphisms 
of compacta; but see \cite{M00}*{Theorem \ref{book:ANR-ss}} for a relation between the definition 
that interests us and what is perhaps the best known definition.
\end{proof}

Let $\Ssh$ be the category whose objects are compacta and whose morphisms are strong shape morphisms.
If $X$ is a metrizable space, let $\K_X$ be the poset of its compact subsets ordered by inclusion
and let $\vec\K_X$ be the associated direct system $(K,\iota^K_L;\K_X)$, where $\iota^K_L\:K\to L$
is the inclusion map between $K,L\in\K_X$ such that $K\subset L$.
If $X$ and $Y$ are metrizable spaces, an {\it antishape morphism} (=compactly generated strong shape 
morphism) $X\to Y$ is an $\ind\Ssh$-morphism $\vec\K_X\to\vec\K_Y$, where ``$\ind$'' stands 
for the ind-category.
(This definition is spelled out it in more elementary terms in \cite{M00}*{\S\ref{book:antishape}}.)
The set of antishape morphisms between metrizable spaces $X$ and $Y$ is denoted by $[X,Y]_\Ash$.
The {\it antishape category} has metrizable spaces as its objects and their antishape morphisms as 
its morphisms.

\begin{theorem} \label{antishape}
(a) There is a functor $\F^\Fish_\Ash$ from the fine shape category to the antishape category 
which sends every object to itself.

(b) The forgetful functor from the homotopy category of metrizable spaces to the antishape category 
is the composition of $\F^\Ho_\Fish$ and $\F^\Fish_\Ash$.
\end{theorem} 

There is hardly anything unexpected in the proof, but the result has important applications
(see Corollary \ref{homology}), so it seems worthwhile to have the details written out.

\begin{proof}
We need to show that for metrizable spaces $X$ and $Y$ the forgetful map $[X,Y]\to [X,Y]_\Ash$ 
factors through $[X,Y]_\Fish$, and moreover it does so in a way that respects composition
of morphisms and the identity morphisms.

Let $M$ and $N$ be absolute retracts containing $X$ and $Y$ as semi-fine Z-sets (see Lemma \ref{fineZ0}).
Thus every compactum $K\subset X$ is contained as a Z-set in a compact absolute retract $M_K\subset M$ 
such that $M_K\cap X=K$, and every compactum $L\subset Y$ is contained as a Z-set in 
a compact absolute retract $N_L\subset N$ such that $N_L\cap Y=L$.
Let us write $M_K^\circ=M_K\but K$ and $N_L^\circ=N_L\but L$.

Let $G\:M\but X\to N\but Y$ be an approaching map.
Let us fix some choice of the map $G_\star\:\kappa_\text{AR} X\to\kappa_\text{AR} Y$ between 
the compact AR filtrations.
Given a $K\in\K_X$, we have $G_\star(M_K^\circ)=N_{L_K}^\circ$ for some $L_K\in\K_Y$.
Hence the restriction $G_K\:M_K^\circ\to N_{L_K}^\circ$ of $G$ determines a strong shape morphism 
$g_K\:K\to L_K$.
Given a $K'\in\K_X$ containing $K$, since $\kappa_\text{AR}Y$ is directed, there exists an $L\in\K_Y$ 
such that $N_{L_K}\cup N_{L_{K'}}\subset N_L$.
Since $G_K$ and $G_{K'}$ are restrictions of the same map $G$, the diagram
\[\begin{tikzcd}[row sep=0.5em,column sep=1.8em]
M_{K'}^\circ\ar[r, "G_{K'}"] & N_{L_{K'}}^\circ \ar[rd,hook] & \\
& &N_L^\circ \\
M_K^\circ\ar[r, "G_K"] \ar[uu,hook] & N_{L_K}^\circ \ar[ru,hook] &         
\end{tikzcd}\]
commutes (even strictly, not just up to proper homotopy).
Hence the diagram
\[\begin{tikzcd}[row sep=0.5em,column sep=1.8em]
K' \ar[r, "g_{K'}"] & L_{K'} \ar[rd,hook] & \\
& & L \\
K\ar[r, "g_K"] \ar[uu,hook] & L_K \ar[ru,hook] &
\end{tikzcd}\]
also commutes.
Thus $\big(g_K\:K\to L_K\big)_{K\in\K_X}$ is a $\dir\Ssh$-morphism $\G\:\vec\K_X\to\vec\K_Y$.

Now let $H\:(M\but X)\x I\to N\but Y$ be an approaching homotopy between
approaching maps $G,\tilde G\:M\but X\to N\but Y$.
Let us fix some choice of the map $H_\star\:\kappa_\text{AR} X\x I\to\kappa_\text{AR} Y$.
A $\dir\Ssh$-morphism $\tilde\G\:\vec\K_X\to\vec\K_Y$ is defined similarly to $\G$ using 
the approaching map $\tilde G$.
Given a $K\in\K_X$, the corresponding components of $\G$ and $\tilde\G$ are 
$g_K\:K\to L_K$ (defined above) and $\tilde g_K\:K\to\tilde L_K$ (defined similarly),
which are in turn given by the restrictions $G_K\:M_K^\circ\to N_{L_K}^\circ$
and $\tilde G_K\:M_K^\circ\to N_{\tilde L_K}^\circ$ of $G$ and $\tilde G$.
By Lemma \ref{approaching homotopy} $H_\star(M_K^\circ)$ lies in $N_L^\circ\x I$ 
for some $L\in\K_Y$.
Then in particular $N_{L_K}\cup N_{\tilde L_K}\subset N_L$.
Moreover, the diagram
\[\begin{tikzcd}[row sep=0.6em,column sep=1.2em]
& N_{L_K}^\circ \ar[rd,hook] & \\
M_K^\circ \ar[ru, "G_K" near end]\ar[rd, "\tilde G_K"' near end]  & & N_L^\circ \\
& N_{\tilde L_K}^\circ \ar[ru,hook] &         
\end{tikzcd}\]
commutes up to proper homotopy, using the restriction $H_K\:M_K^\circ\x I\to N_L^\circ$ of $H$.
Hence the diagram
\[\begin{tikzcd}[row sep=0.6em,column sep=1.2em]
& L_K \ar[rd,hook] & \\
K \ar[ru, "g_K" near end]\ar[rd, "\tilde g_K"' near end]  & & L \\
& \tilde L_K \ar[ru,hook] &         
\end{tikzcd}\]
also commutes.
Thus $\G$ and $\tilde\G$ represent the same $\ind\Ssh$-morphism $\vec\K_X\to\vec\K_Y$.

Now let $f\:X\to Y$ be a continuous map. 
By Lemma \ref{Milnor}(a) it extends to a continuous map $F\:M\to N$ such that $F^{-1}(Y)=X$.
Let $\F=\big(f_K\:K\to L_K\big)_{K\in\K_X}$ be given by the preceding construction applied to
the restrictions $F_K\:M_K^\circ\to N_{L_K}^\circ$ of $F$, where each $L_K\in\K_Y$ is chosen
so that $F(M_K)\subset N_{L_K}$.
Each $f_K$ is the image of the homotopy class of $f|_K\:K\to L_K$ under the forgetful map
$[K,L_K]\to[K,L_K]_\Ssh$.
On the other hand, let $f'_K\:K\to f(K)$ be the image of the homotopy class of 
$f|_K\:K\to f(K)$ under the forgetful map $[K,f(K)]\to[K,f(K)]_\Ssh$.
Then the $\dir\Ssh$-morphism $\F'\bydef (f'_K)_{K\in\K_X}$ represents the image of $[f]$
under the map $[X,Y]\to[X,Y]_\Ash$.
But clearly $\F$ and $\F'$ represent the same $\ind\Ssh$-morphism $\vec\K_X\to\vec\K_Y$.

In particular, the preceding discussion applies to the case where $f=\id_X$ 
(but $M\ne N$).
From this it follows that if given approaching homotopy classes $[G]\:M\but X\to N\but X$ and 
$[G']\:M'\but X\to N'\but Y$ represent the same fine shape morphism, then
the corresponding $\ind\Ssh$-morphisms $[\G],[\G']\:\vec\K_X\to\vec\K_Y$ are equal.
Thus we have constructed a map $\F_{XY}\:[X,Y]_\Fish\to [X,Y]_\Ash$.
By the general case of the preceding discussion it sends the fine shape morphism represented 
by a map $f\:X\to Y$ to the antishape morphism represented by $f$.
Thus the forgetful map $[X,Y]\to [X,Y]_\Ash$ factors through $\F_{XY}$.
Also we get that $\F_{XX}([\id_X])=[\id_X]$.
It is straightforward to verify that for fine shape morphisms $X\xr{f}Y\xr{g}Z$ we have
$\F_{YZ}(g)\F_{XY}(f)=\F_{XZ}(gf)$.
\end{proof}

Theorem \ref{antishape}(a) immediately yields

\begin{corollary} Fine shape equivalence implies antishape equivalence.
\end{corollary}

\begin{example} The forgetful map $[X,S^n]_\Fish\to [X,S^n]_\Ash$ need not be injective
for $n\ge 1$ and locally compact separable metrizable $X$.

Indeed, by Theorem \ref{ANR} the forgetful map $[X,Q]\to[X,Q]_\Fish$ is bijective
as long as $Q$ is an ANR.
However, the forgetful map $[X,S^n]\to [X,S^n]_\Ash$ is not injective for $n\ge 2$,
when $X$ is the mapping telescope of a direct sequence $S^{n-1}\xr{p} S^{n-1}\xr{p}\dots$ 
of degree $p$ maps, and also for $n=1$, when $X$ is the subset
$\big(\{0\}\cup\{\pm\frac1n\mid n\in\N\}\big)\x[-1,1]\but\{(0,0)\}$ of the plane 
\cite{M00}*{Example \ref{book:ash-anr}}.
\end{example}

\subsection{Anti-resolutions} 

Let $X$ be a local compactum (=locally compact separable metrizable space).
A (sequential) {\it anti-resolution} of $X$ is a chain 
$\M$ of subsets $M_0\subset M_1\subset\dots$ of a metrizable space $M$ 
containing $X$ such that
\begin{enumerate}
\item each $M_i$ is a compact absolute retract;
\item each $K_i\bydef M_i\cap X$ is a Z-set in $M_i$;
\item $\K^\M_X\bydef \{K_0,K_1,\dots\}$ is a cofinal subset of $\K_X$,
\end{enumerate}
where $\K_X$ is the poset of all compact subsets of $X$, ordered by inclusion.
We could more accurately define an anti-resolution to be a pair $(M,\M)$ satisfying these
conditions, but in fact there will be no need to mention $M$, so it is unnecessary 
to reserve a letter for it.

On the other hand, we call $\M$ a {\it $\Fish$-resolution} of $X$ if the following hold:
\begin{enumerate}
\item[(1$'$)] $M$ is an absolute retract;
\item[(2$'$)] $X$ is a Z-set in $M$;
\item[(3$'$)] $\kappa_\M X\bydef \{M_0\but K_0,\,M_1\but K_1,\dots\}$ is a cofinal subset of $\kappa_M X$.
\end{enumerate}
We could more accurately define a $\Fish$-resolution to be a pair $(M,\M)$ satisfying these
conditions, but in fact $M$ is determined by $\M$ since (3$'$) implies that $M=\bigcup_i M_i$.

\begin{lemma} \label{anti-resolution}
Every local compactum $X$ admits an anti-resolution which is also its $\Fish$-resolution.
\end{lemma}

\begin{proof}
$X$ is homeomorphic to a closed subset of $I^\infty\x[0,\infty)$, where $I^\infty$ 
is the Hilbert cube (see \cite{M00}*{Proposition \ref{book:lc-emb}}).
So we may identify $X$ with a closed subset of $I^\infty\x[0,\infty)\x\{0\}$,
which is a Z-set in the absolute retract $M\bydef I^\infty\x[0,\infty)\x I$.
Each $M_i\bydef I^\infty\x[0,i]\x I$ is a compact absolute retract and 
$M_0\subset M_1\subset\dots$.
Each $K_i\bydef X\cap M_i$ lies in $I^\infty\x[0,\infty)\x\{0\}$ and hence is a Z-set in $M_i$.
Since $M=\bigcup_i M_i$ and each $M_i\subset\Int M_{i+1}$, the set $\{M_0,M_1,\dots\}$ 
is cofinal in $\K_M$ (see \cite{M00}*{Proposition \ref{book:local compactum}}).
Hence $\{K_0,K_1,\dots\}$ is cofinal in $\K_X$ and $\{M_0\but K_0,\,M_1\but K_1,\dots\}$ 
is cofinal in $\kappa_M X$.
\end{proof}

\begin{proof}[Alternative proof]
Let $M$ be any locally compact separable absolute retract containing $X$ as a fine Z-set
(see Lemma \ref{fineZ}).
Thus every compactum $K\subset M$ lies in a compact absolute retract $M_K\subset M$ such that 
$M_K\cap X=K\cap X$ and $M_K\cap X$ is a Z-set in $M_K$.
Since $X$ is a local compactum, there exists a chain $K_0\subset K_1\subset\dots$ of compact subsets 
of $X$ such that $X=\bigcup_i K_i$ and each $K_i\subset\Int K_{i+1}$; moreover,
this chain is cofinal in $\K_X$ (see \cite{M00}*{Proposition \ref{book:local compactum}}).
On the other hand, since $M\but X$ is a local compactum, it the union of an increasing chain 
$Q_0\subset Q_1\subset\dots$ of compacta.

Let $M_0=M_{K_0\cup Q_0}$.
Assuming that $M_i$ is defined, let $N_i$ be a compact neighborhood of $M_i$, disjoint from
$X\but\Int K_{i+1}$, and let $M_{i+1}=M_{N_i\cup K_{i+1}\cup Q_{i+1}}$.
Then each $M_i\cap X=K_i$ and $K_i$ is a Z-set in $M_i$.
Moreover, each $M_i\subset\Int M_{i+1}$.
Finally, $\bigcup_i M_i$ contains both $\bigcup_i K_i=X$ and $\bigcup_i Q_i=M\but X$,
and so equals $M$.
Hence $\{M_0,M_1,\dots\}$ is cofinal in $\K_M$.
Consequently $\{M_0\but K_0,\,M_1\but K_1,\dots\}$ is cofinal in $\kappa_M X$.
\end{proof}

\begin{lemma} \label{telescope-zset}
Let $\M=\big(M_0\subset M_1\subset\dots\big)$ be an anti-resolution of 
a local compactum $X$, and let $K_i=M_i\cap X$.
Let $K_{[0,\infty)}$ and $M_{[0,\infty)}$ be the mapping telescopes of the chains 
$K_0\subset K_1\subset\dots$ and $M_0\subset M_1\subset\dots$.

Then $\M'\bydef \big(M_{[0,1]}\subset M_{[0,2]}\subset\dots\big)$ is a $\Fish$-resolution
of $K_{[0,\infty)}$.
\end{lemma}

\begin{proof}
Every finite telescope $M_{[0,n]}$ deformation retracts onto $M_n$, which is contractible.
Hence $M_{[0,\infty)}$ has trivial homotopy groups.
On the other hand, $M_{[0,\infty)}$ is an ANR (see \cite{M00}*{Corollary \ref{book:telescope-ANR}}).
In particular, it is homotopy equivalent to a polyhedron.
Hence $M_{[0,\infty)}$ is contractible by Whitehead's theorem.
Therefore $M_{[0,\infty)}$ is an absolute retract.

Since each $K_i$ is a Z-set in $M_i$, there exists a homotopy $\phi_t^i\:M_i\to M_i$ 
such that $\phi_0^i=\id_{M_i}$ and $\phi_t^i(M_i)\subset M_i\but K_i$ for all $t>0$.
Let us define $\Phi_t^i\:M_i\x I\to M_{i+1}$ by
$\Phi_t^i(x,s)=\phi_{\max(t-s,0)}^{i+1}\big(\phi_{\min(t,s)}^i(x)\big)$.
Then for each $x\in M_i$ we have $\Phi_t^i(x,0)=\phi_t^{i+1}(x)$ and $\Phi_t^i(x,1)=\phi_t^i(x)$, and 
for $t\le s$ we have $\Phi_t^i(x,s)=\phi_t^i(x)\in M_i$.
It follows that we may define $\Phi_t\:M_{[0,\infty)}\to M_{[0,\infty)}$, where 
$M_{[0,\infty)}=\bigcup_i M_i\x[i,i+1]\subset M\x [0,\infty)$, by 
$\Phi_t(x,i+1-s)=\big(\Phi_t^i(x,s),\,i+1-s+t\big)$ for $s\in [0,1]$.
Now for each $x\in M_i$ we also have $\Phi_0^i(x,s)=x$ and $\Phi_t^i(x,s)\in M_{i+1}\but K_{i+1}$ 
for all $t>0$ (indeed, if $s\ge t$, then $\min(t,s)=t>0$; and if $s<t$, then $\max(t-s,0)=t-s>0$).
Consequently for each $y\in M_{[0,\infty)}$ we have $\Phi_0(y)=y$ and 
$\Phi_t(y)\in M_{[0,\infty)}\but K_{[0,\infty)}$ for all $t>0$.
Thus $K_{[0,\infty)}$ is a Z-set in $M_{[0,\infty)}$.

Since each $M_i$ is compact, so is $M_{[0,i]}$.
Clearly each $M_{[0,i]}\subset\Int M_{[0,i+1]}$.
Hence $\M'\bydef \big(M_{[0,1]}\subset M_{[0,2]}\subset\dots\big)$ is cofinal in $\K_{M_{[0,\infty)}}$.
Consequently $\kappa_{\M'}K_{[0,\infty)}$ is cofinal in $\kappa_{M_{[0,\infty)}}K_{[0,\infty)}$.
\end{proof}

\begin{theorem} \label{ash-surj}
If $X$ is a local compactum (=locally compact separable metrizable space), then the forgetful map 
$[X,Y]_\Fish\to [X,Y]_\Ash$ is surjective for every separable metrizable space $Y$.
\end{theorem}

Theorems \ref{ANR} and \ref{ash-surj} imply

\begin{corollary} \label{lc-anr} \cite{M00}*{Theorem \ref{book:lc-anr}}
If $X$ is a local compactum and $Q$ is a separable ANR, then the forgetful map $[X,Q]\to[X,Q]_\Ash$ is surjective.
\end{corollary}

The proof of Theorem \ref{ash-surj} is modeled after the original proof of 
Corollary \ref{lc-anr}.

\begin{proof}[Proof of Theorem \ref{ash-surj}]
By Lemma \ref{anti-resolution} $X$ has an anti-resolution $\M=\big(M_0\subset M_1\subset\dots\big)$
which is also its $\Fish$-resolution.
Let $K_i=M_i\cap X$ and let us write $M_i^\circ=M_i\but K_i$.
In addition, let $N$ be an absolute retract containing $Y$ as a fine Z-set (see Lemma \ref{fineZ};
this is the only place in the proof where we need $Y$ to be separable).
Thus every compactum $L\subset N$ lies in a compact absolute retract $N_L\subset N$ such that 
$N_L\cap Y=L\cap Y$ and $N_L\cap Y$ is a Z-set in $N_L$.
Let us write $N_L^\circ=N_L\but L$.

Given an antishape morphism $f\:X\to Y$, it is represented by a $\dir\Ssh$-morphism 
$\F\:\vec\K^\M_X\to\vec\K_Y$ whose components $f_i\:K_i\to L_{K_i}$ are strong shape
morphisms that are represented by some proper maps $F_i\:M_i^\circ\to N_{L_{K_i}}^\circ$.
Since the $f_i$ form a $\dir\Ssh$-morphism, for each $i$ there exists an $L_i\in\K_Y$
containing $L_{K_i}\cup L_{K_{i+1}}$ such the diagram
\[\begin{tikzcd}[row sep=0.5em,column sep=1.8em]
M_{i+1}^\circ \ar[r, "F_{i+1}"] & N_{L_{K_{i+1}}}^\circ 
\ar[rd,hook] & \\
& & N_{L_i}^\circ \\
M_i^\circ\ar[r, "F_i"] \ar[uu,hook] & N_{L_{K_i}}^\circ 
\ar[ru,hook] &
\end{tikzcd}\]
commutes up to proper homotopy.
Let $H_i\:M_i^\circ\x I\to N_{L_i}^\circ$ be this proper homotopy.
Also let $N_0=N_{L_0}$ and for each $i\ge 0$ let $N_{i+1}=N_{N_i\cup N_{L_{i+1}}}$.
Then each $N_i$ is a compact absolute retract, $N_i\cap Y=L_i$ and $L_i$ is a Z-set in $N_i$.
Moreover, $N_0\subset N_1\subset\dots$ and each $N_{L_i}\subset N_i$.
Let us write $N_i^\circ=N_i\but Y$.

Let $K_{[0,\infty)}$ and $M_{[0,\infty)}$ be the mapping telescopes of the chains 
$K_0\subset K_1\subset\dots$ and $M_0\subset M_1\subset\dots$.
Then $M_{[0,\infty)}^\circ\bydef M_{[0,\infty)}\but K_{[0,\infty)}$ is the mapping telescope of the chain
$M_0^\circ\subset M_1^\circ\subset\dots$.
By Lemma \ref{telescope-zset} $M_{[0,\infty)}$ is an absolute retract containing $K_{[0,\infty)}$ as
a Z-set, and the chain $M_{[0,1]}^\circ\subset M_{[0,2]}^\circ\subset\dots$ is cofinal in
$\kappa_{M_{[0,\infty)}}K_{[0,\infty)}$.

The maps $F_i$ and the homotopies $H_i$ combine into a map
$F\:M_{[0,\infty)}^\circ\to N\but Y$.
Each $F|_{M_{[0,n]}^\circ}$ is a proper map of $M_{[0,n]}^\circ$ into $N_n^\circ$.
Hence $F$ is a $K_{[0,\infty)}$-$Y$-approaching map.

Since $X$ is a local compactum, there exists a homotopy equivalence 
$\phi\:X\to K_{[0,\infty)}$ which is homotopy inverse to the projection $\pi\:K_{[0,\infty)}\to X$
(see \cite{M00}*{Proposition \ref{book:lc-telescope-a}}).
By Theorem \ref{fsm}(a) $\phi$ combines with some $X$-$K_{[0,\infty)}$-approaching map 
$\Phi\:M\but X\to M_{[0,\infty)}^\circ$ into a continuous map $M\to M_{[0,\infty)}$. 
In particular, $\Phi$ sends each $M_i^\circ$ into $M_{[0,n_i]}^\circ$ for some $n_i\in\N$.
The composition $G\:M\but X\xr{\Phi} M_{[0,\infty)}^\circ\xr{F} N\but Y$ is an $X$-$Y$-approaching map.
It sends each $M_i^\circ$ into $N_{n_i}^\circ$.

The forgetful map%
\footnote{The forgetful map $[X,Y]_\Fish\to [X,Y]_\Ash$ was constructed in the proof of
Theorem \ref{antishape}, where $X$ was assumed to be a semi-fine Z-set in $M$.
In the current proof we are not assuming $X$ to be a semi-fine Z-set in $M$, although we could
have assumed this (using the alternative proof of Lemma \ref{anti-resolution}).
But in fact we do not really need this assumption, because we are assuming that each
$X_i$ is a Z-set in $M_i$ and that the chain $M_0^\circ\subset M_1^\circ\subset\dots$
is cofinal in $\kappa_M X$, and this is enough for the proof of Theorem \ref{antishape}
to go through.
Strictly speaking, this modified proof of Theorem \ref{antishape} in the case where $X$
is locally compact yields a new definition of the forgetful map $[X,Y]_\Fish\to [X,Y]_\Ash$;
but it also works to show (taking into account the alternative proof of 
Lemma \ref{anti-resolution}) that this new map equals the original one.}
$[X,Y]_\Fish\to [X,Y]_\Ash$ sends $[G]$ to the $\ind\Ssh$-morphism
represented by the $\dir\Ssh$-morphism $\G\:\vec\K^\M_X\to\vec\K_Y$ whose components 
$g_i\:K_i\to L_{n_i}$ are strong shape morphisms that are represented by the proper maps
$G|_{M_i^\circ}\:M_i^\circ\to N_{n_i}^\circ$.

Let $S_i$ denote the inclusion $M_i^\circ=M_i^\circ\x\{i\}\subset M_{[0,\infty)}^\circ$
and let $\Pi$ denote the projection $M_{[0,\infty)}^\circ\to M\but X$.
The composition $\Pi S_i\:M_i^\circ\to M\but X$ is the inclusion map, so 
$G|_{M_i^\circ}=G\Pi S_i=F\Phi\Pi S_i$.
On the other hand, since $\phi$ is a homotopy equivalence, the composition 
$K_{[0,\infty)}\xr{\pi}X\xr{\phi}K_{[0,\infty)}$ is homotopic to $\id_{K_{[0,\infty)}}$;
consequently by Theorem \ref{fsm}(b) $\Phi\Pi\:M_{[0,\infty)}^\circ\to M_{[0,\infty)}^\circ$ 
is homotopic to $\id_{M_{[0,\infty)}^\circ}$ by a $K_{[0,\infty)}$-$K_{[0,\infty)}$-approaching 
homotopy $H_t\:M_{[0,\infty)}^\circ\to M_{[0,\infty)}^\circ$.
Thus for each $i$ there exists an $m_i$ such that $H_t(M_{[0,i]}^\circ)\subset M_{[0,m_i]}^\circ$ 
for all $t\in I$.
Hence $G|_{M_i^\circ}=F\Phi\Pi S_i$ is homotopic to $FS_i=F_i$ by the proper homotopy 
$FH_tS_i\:M_i^\circ\to N_{m_i}^\circ$.
In other words, the diagram
\[\begin{tikzcd}[row sep=1.2em,column sep=1.2em]
& N_{L_{K_i}}^\circ \ar[rd,hook]
\ar[r,hook] & N_i^\circ \ar[d,hook]\\
M_i^\circ \ar[ru, "F_i" near end]\ar[rd, "G|_{M_i^\circ}"' near end]  & & N_{m_i}^\circ \\
& N_{n_i}^\circ \ar[ru,hook] &         
\end{tikzcd}\]
commutes up to proper homotopy (here $m_i\ge n_i$ due to $H_0=\Phi\Pi$ and $m_i\ge i$ due to $H_1=\id$).
Hence the diagram
\[\begin{tikzcd}[row sep=1.2em,column sep=1.2em]
& L_{K_i} \ar[rd,hook] 
\ar[r,hook] & L_i \ar[d,hook] \\
K_i \ar[ru, "f_i" near end]\ar[rd, "g_i"' near end]  & & L_{m_i} \\
& L_{n_i} \ar[ru,hook] &         
\end{tikzcd}\]
also commutes.
Thus $\F$ and $\G$ represent the same $\ind\Ssh$-morphism $\vec\K^\M_X\to\vec\K_Y$.
\end{proof}

\subsection{Sequential strong antishape} \label{sash}

Let us define strong antishape morphisms between local compacta $X$ and $Y$.
Let $\M=(M_0\subset M_1\subset\dots)$ and $\NN=(N_0\subset N_1\subset\dots)$ be anti-resolutions 
of $X$ and of $Y$, respectively.
Let $K_i=M_i\cap X$ and $L_i=N_i\cap Y$.
Let us write $M_i^\circ=M_i\but K_i$ and $N_i^\circ=N_i\but L_i$.
Let $\kappa_\M X=\{M_0^\circ,M_1^\circ,\dots\}$ and $\kappa_\NN Y=\{N_0^\circ,N_1^\circ,\dots\}$.

A {\it coherent map} $\F\:\kappa_\M X\to\kappa_\NN Y$ consists of (i) a sequence $k_0<k_1<\dots$ of 
natural numbers, (ii) a sequence of proper maps $f_i\:M_i^\circ\to N_{k_i}^\circ$ 
such that the diagrams
\[\begin{tikzcd}[row sep=2em]
M_{i+1}^\circ\ar[r,"f_{i+1}"]&N_{k_{i+1}}^\circ\\
M_i^\circ\ar[r,"f_i"']\ar[u,hook]&N_{k_i}^\circ.\ar[u,hook]
\end{tikzcd}\]
commute up to proper homotopy, via some proper homotopies $F_i\:M_i^\circ\x I\to N_{k_{i+1}}^\circ$;
and (iii) these proper homotopies $F_i$.
We will denote such an $F_i$ graphically by
\[\begin{tikzcd}[row sep=2em]
M_{i+1}^\circ\ar[r,"f_{i+1}"]&N_{k_{i+1}}^\circ\\
M_i^\circ\ar[r,"f_i"']\ar[u,hook]&N_{k_i}^\circ.\ar[u,hook]
\ar[ul,phantom,"{}_{F_i}\hspace{-2pt}\rotatebox{135}{$\Leftarrow$}"]
\end{tikzcd}\]
The coherent map $\F\:\kappa_\M X\to\kappa_\NN Y$ is called a {\it coextension} of a continuous map 
$f\:X\to Y$ if the following holds for each $i\in\N$: (i) $f_i$ and $f|_{K_i}$ are the restrictions of 
a continuous map $M_i\to N_{k_i}$;%
\footnote{Let us note that for this to hold $f(K_i)$ must lie in $L_{k_i}$.} 
(ii) $F_i$ and the composition $K_i\x I\xr{\text{projection}}K_i\xr{f_i}L_{k_i}$ are 
the restrictions of a continuous homotopy $M_i\x I\to N_{k_{i+1}}$. 

On the other hand, we say that the coherent map $\F$ is {\it coherently homotopic} to another coherent map 
$\G\:\kappa_\M X\to\kappa_\NN Y$ consisting of a sequence $(l_i)$, proper maps 
$g_i\:M_i^\circ\to N_{l_i}^\circ$ and proper homotopies
\[\begin{tikzcd}[row sep=2em]
M_{i+1}^\circ\ar[r,"g_{i+1}"]&N_{l_{i+1}}^\circ\\
M_i^\circ\ar[r,"g_i"']\ar[u,hook]&N_{l_i}^\circ.\ar[u,hook]
\ar[ul,phantom,"{}_{G_i}\hspace{-2pt}\rotatebox{135}{$\Leftarrow$}"]
\end{tikzcd}\]
if there exist (i) natural numbers $h_0<h_1<\dots$, where each $h_i\ge\max(k_i,l_i)$, and proper homotopies
\[\begin{tikzcd}[row sep=2em]
M_i^\circ\ar[r,"f_i"]\ar[d,"g_i"']&N_{k_i}^\circ\ar[d,hook]\\
N_{l_i}^\circ\ar[r,hook]&N_{h_i}^\circ,
\ar[ul,phantom,"\rotatebox{45}{$\Leftarrow$}\hspace{-4pt}_{H_i}"]
\end{tikzcd}\]
and (ii) for each $i$ a proper homotopy keeping $M_i^\circ\x\partial I$ fixed between the following 
two proper homotopies $M_i^\circ\x I\to N_{h_{i+1}}^\circ$:
\[\begin{tikzcd}[row sep=1.5em,column sep=1.5em]
M_i^\circ \ar[dddr,phantom,"\rotatebox{47}{$\Leftarrow$}\hspace{-4pt}_{G_i}"]
\ar[drrr,phantom,"\rotatebox{33}{$\Leftarrow$}\hspace{-4pt}_{\bar F_i}"]
\ar[rd,hook] \ar[dd,"g_i"] \ar[rr,"f_i"] & & N_{k_i}^\circ \ar[rd,hook] & \\
& M_{i+1}^\circ \ar[dd,"g_{i+1}"'] \ar[rr,"f_{i+1}"] & & N_{k_{i+1}}^\circ \ar[dd,hook] \\
N_{l_i}^\circ \ar[rd,hook] & & \rotatebox{40}{$\Leftarrow$}\hspace{-4pt}_{H_{i+1}} & \\
& N_{l_{i+1}}^\circ \ar[rr,hook] & & N_{h_{i+1}}^\circ
\end{tikzcd}
\hspace{20pt} \Rrightarrow\hspace{15pt}
\begin{tikzcd}[row sep=1.5em,column sep=1.5em]
M_i^\circ \ar[dd, "g_i"'] \ar[rr, "f_i"] & & N_{k_i}^\circ \ar[dd, hook] \ar[rd, hook] & \\
& \rotatebox{40}{$\Leftarrow$}\hspace{-4pt}_{H_i} & & N_{k_{i+1}}^\circ \ar[dd, hook] \\
N_{l_i}^\circ \ar[rr, hook] \ar[rd, hook] & & N_{h_i}^\circ \ar[rd, hook] & \\
& N_{l_{i+1}}^\circ \ar[rr, hook] & & N_{h_{i+1}}^\circ,\!
\end{tikzcd}\]
where $\bar F_i$ denotes $F_i$ with reversed time, and the graphic notation for stacked 
combination of homotopies is the obvious one (see \cite{M00}*{\S\ref{book:seq-ss}} for the details).

\begin{lemma} \label{coext}
(a) Every map $f\:X\to Y$ admits a coextension $\F\:\kappa_\M X\to\kappa_\NN Y$.

(b) If $\F,\G\:\kappa_\M X\to\kappa_\NN Y$ are coextensions of homotopic maps $f,g\:X\to Y$,
then $\F$ and $\G$ are coherently homotopic.
\end{lemma}

\begin{proof} This follows easily from the compact case of Theorem \ref{fsm}.
\end{proof}

Now let $\M'=(M_0'\subset M_1'\subset\dots)$ and $\NN'=(N_0'\subset N_1'\subset\dots)$ be some other
anti-resolutions of $X$ and of $Y$, respectively.
We say that coherent maps $\F\:\kappa_\M X\to\kappa_\NN Y$ and 
$\F'\:\kappa_{\M'}X\to\kappa_{\NN'}Y$ represent the same {\it strong antishape morphism} 
$X\to Y$ if the diagram
\[\begin{CD}
\kappa_\M X@>\F>>\kappa_\NN Y\\
@V\G_XVV@VV\G_YV\\
\kappa_{\M'}X@>\F'>>\kappa_{\NN'}Y
\end{CD}\]
commutes up to coherent homotopy, where $\G_X$ and $\G_Y$ are some coherent coextensions
of $\id_X$ and $\id_Y$ respectively (their existence is guaranteed by Lemma \ref{coext}(a)). 
To be more precise, the relation that we have just defined on coherent maps is 
a well-defined equivalence relation (by Lemma \ref{coext}(b) it does not depend on the choice 
of the maps $\G_X$ and $\G_Y$ and is symmetric), and strong antishape morphisms $X\to Y$ are defined 
as its equivalence classes.

\begin{theorem} \label{fish-sash} Let $X$ and $Y$ be local compacta.

(a) There is a bijection between $[X,Y]_\Fish$ and the set of strong antishape morphisms $X\to Y$
which respects composition of morphisms and the identity morphisms.

(b) Let us fix some anti-resolutions $\M$ and $\NN$ of $X$ and $Y$ respectively which are also their
$\Fish$-resolutions.
Then there is a bijection between $[X,Y]_\Fish$ and the set of coherent homotopy classes of 
coherent maps $\kappa_\M X\to\kappa_\NN Y$.
\end{theorem}

\begin{proof}[Proof. (b)] Let $M=\bigcup_i M_i$ and $N=\bigcup_i N_i$ where 
$\M=(M_0\subset M_1\subset\dots)$ and $\NN=(N_0\subset N_1\subset\dots)$.
Since $M$ and $N$ are absolute retracts containing $X$ and $Y$ as Z-sets,
by Proposition \ref{representation} there is a bijection between $[X,Y]_\Fish$ 
and the set $[X,Y]_{MN}$ of $X$-$Y$-approaching homotopy classes of $X$-$Y$-approaching maps 
$M\but X\to N\but Y$.

Let $K_{[0,\infty)}$ and $M_{[0,\infty)}$ be as in the statement Lemma \ref{telescope-zset}.
Since $X$ is a local compactum, it is homotopy equivalent to $K_{[0,\infty)}$
(see \cite{M00}*{Proposition \ref{book:lc-telescope-a}}).
Hence it follows from Lemma \ref{telescope-zset} and Theorem \ref{fsm} that there is 
a bijection between $[X,Y]_{MN}$ and the set $[K_{[0,\infty)},Y]_{M_{[0,\infty)}N}$
of $K_{[0,\infty)}$-$Y$-approaching homotopy classes of $K_{[0,\infty)}$-$Y$-approaching maps 
$M_{[0,\infty)}\but K_{[0,\infty)}\to N\but Y$.

Finally, it is not hard to see that $[K_{[0,\infty)},Y]_{M_{[0,\infty)}N}$ can be identified 
with the set of coherent homotopy classes of coherent maps $\kappa_\M X\to\kappa_\NN Y$.
\end{proof}

\begin{proof}[(a)] Let us define an {\it f-strong antishape morphism} $X\to Y$ similarly to a strong
antishape morphism $X\to Y$, but using only those anti-resolutions of $X$ and $Y$ which are also 
their $\Fish$-resolutions.
Then it follows from Lemma \ref{anti-resolution} that there is a bijection between the set
of strong antishape morphisms $X\to Y$ and the set of f-strong antishape morphisms $X\to Y$.

Let $\M$ and $\NN$ be as in (b) and let $M$ and $N$ be as in the proof of (b).
By the proof of (b) there is a bijection between $[X,Y]_{MN}$ and the set of coherent homotopy 
classes of coherent maps $\kappa_\M X\to\kappa_\NN Y$.
Then it follows from Lemma \ref{Milnor}, Theorem \ref{fsm} and Lemma \ref{coext} that there is 
a bijection between $[X,Y]_\Fish$ and the set of f-strong antishape morphisms $X\to Y$.
\end{proof}

\subsection{Shape}

We refer to \cite{M00}*{\S\ref{book:shape}} for a treatment of the shape category, which
is the one that we will follow in the proof of the following theorem.
The set of shape morphisms between metrizable spaces $X$ and $Y$ is denoted by $[X,Y]_\Sh$.

\begin{theorem}\label{shape}
(a) There is a functor $\F^\Fish_\Sh$ from the fine shape category to the shape category 
which sends every object to itself.

(b) The forgetful functor from the homotopy category of metrizable spaces to the shape category 
is the composition of $\F^\Ho_\Fish$ and $\F^\Fish_\Sh$.
\end{theorem}

The proof is not difficult, but the result has important applications
(see Corollary \ref{cohomology}), so it seems worthwhile to have the details written out.

\begin{proof}
We need to show that for metrizable spaces $X$ and $Y$ the forgetful map $[X,Y]\to [X,Y]_\Sh$ 
factors through $[X,Y]_\Fish$, and moreover it does so in a way that respects composition
of morphisms and the identity morphisms.

Let $M$ and $N$ be absolute retracts containing $X$ and $Y$ as Z-sets.
Let $\U$ and $\V$ be the posets of all open neighborhoods of $X$ in $M$ and of $Y$ in $N$,
ordered by inclusion.
Being open subsets of ANRs, their members are themselves ANRs 
(see \cite{M00}*{Lemma \ref{book:ANR-basic}(c)}).
Since $\U$ and $\V$ are codirected, the dual posets $\U^*$ and $\V^*$ are directed.
Let $\cev\U$ and $\cev\V$ be the inverse systems indexed by $\U^*$ and $\V^*$ and formed 
by their members, with inclusions as bonding maps. 
Then $\cev\U$ and $\cev\V$ are ANR resolutions of $X$ and of $Y$
(see \cite{M00}*{Lemma \ref{book:semi-resolution}(a)}).

Since $X$ is a Z-set in $M$, it is a Z-set in each $U\in\U$; moreover,
for each $U\in\U$ there exists a homotopy $\phi_t^U\:M\to M$ such that $\phi_0^U=\id_M$, 
$\phi_t^U|_{M\but U}=\id_{M\but U}$ for all $t\in I$, and $\phi_t^U(M)\subset M\but X$ 
for all $t>0$ \cite{M00}*{Lemma \ref{book:z-set-rel}}.
Given a $W\in\U$, we have the homotopy $\Phi_t^{UW}\:M\to M\but X$ between $\phi_1^W$ and
$\phi_1^U$ defined by $\Phi_t^{UW}(w)=\phi_t^U\big(\phi_{1-t}^W(w)\big)$.
Let us note that $\Phi_t^{UW}|_{M\but (U\cup W)}=\id_{M\but (U\cup W)}$ for all $t\in I$.

Let $G\:M\but X\to N\but Y$ be an approaching map.
Given a $V\in\V$, let $U=G^{-1}(V\but X)\cup X$ and let us consider the composition 
$g_V\:U\xr{\phi_1^U|}U\but X\xr{G|}V\but X\subset V$.
Given an $O\in\V$ that lies in $V$, let $W=G^{-1}(O\but X)\cup X$ and let us note 
that the diagram
\[\begin{tikzcd}
W\ar[r,"g_O"]\ar[d,hook]&O\ar[d,hook]\\
U\ar[r,"g_V"]&V
\end{tikzcd}\]
commutes up to homotopy, using the homotopy
$W\xr{\Phi_t^{UW}|}U\but X\xr{G|}V\but X\subset V$.
Hence $\G\bydef (g_V)_{V\in\V}$ is an $\inv\Ho$-morphism between $\cev\U$ and $\cev\V$.

Now suppose that $H\:(M\but X)\x I\to N\but Y$ is an approaching homotopy between
approaching maps $G,\tilde G\:M\but X\to N\but Y$.
An $\inv\Ho$-morphism $\tilde\G\:\cev\U\to\cev\V$ is defined similarly to $\G$ using 
the approaching map $\tilde G$.
Given a $V\in\V$, the corresponding components of $\G$ and $\tilde\G$ are 
$g_V\:U\to V$ (defined above) and $\tilde g_V\:\tilde U\to V$ (defined similarly).
By Lemma \ref{approaching homotopy} $H^{-1}(V\but Y)$ contains $(W\but X)\x I$ 
for some $W\in\U$.
Then in particular $W\subset U\cap\tilde U$.
Hence the diagram
\[\begin{tikzcd}
W\ar[r,hook]\ar[d,hook]&U\ar[d,"g_V"]\\
\tilde U\ar[r,"\tilde g_V"]&V
\end{tikzcd}\]
commutes up to homotopy, using the stacked combination of the homotopies
\begin{align*}
&W\xr{\Phi_t^{UW}|}U\but X\xr{G|\ }V\but Y\subset V,\\
&W\xr{\,\phi_1^W|\,}W\but X\xr{H_t|}V\but Y\subset V,\\
&W\xr{\Phi_t^{W\tilde U}|}\tilde U\but X\xr{\tilde G|\ }V\but Y\subset V,
\end{align*}
where $H_t(x)=H(x,t)$.
Thus $\G$ and $\tilde\G$ represent the same $\pro\Ho$-morphism $\cev\U\to\cev\V$.%
\footnote{In fact, the $\pro\Ho$-morphism determined by $\G$ does not depend 
on the choice of the homotopies $\phi_t^U$ and so depends only on the approaching
homotopy class of $G$ (this is not used in the proof of the theorem).
Indeed, suppose that $\psi_t^U$ are different homotopies of the same kind.
Let us define a homotopy $\chi_t^U\:M\to M\but X$ between $\phi_1^U$ and
$\psi_1^U$ by $\chi_t^U(x)=\psi_t^U\big(\phi_{1-t}^U(x)\big)$.
Then $U\xr{\chi_t^U|}U\but X\xr{G|}V\but X\subset V$ is a homotopy
between $g_V$ and the similar map defined using $\psi_t$ in place of $\phi_t$.}

Now let $f\:X\to Y$ be a continuous map. 
By Lemma \ref{Milnor}(a) it extends to a continuous map $F\:M\to N$ such that $F^{-1}(Y)=X$.
Let $\F=(f_V)_{V\in\V}$ be given by the preceding construction applied to
the restriction $M\but X\to N\but Y$ of $F$.
Given a $V\in\V$, let $U=F^{-1}(V)$.
Then the diagram
\[\begin{tikzcd}
X\ar[r,"f"]\ar[d,hook]&Y\ar[d,hook]\\
U\ar[r,"f_V"]&V
\end{tikzcd}\]
commutes up to homotopy, using the homotopy $X\xr{\phi_t^U|}U\xr{F|}V$.
Hence $\F$ is an expansion of $f$.

In particular, the preceding discussion applies to the case where $f=\id_X$ 
(but $M\ne N$).
From this it follows that if given approaching homotopy classes $[G]\:M\but X\to N\but X$ 
and $[G']\:M'\but X\to N'\but Y$ represent the same fine shape morphism, then
the corresponding $\pro\Ho$-morphisms $[\G]\:\cev\U\to\cev\V$ and 
$[\G']\:\cev{\U'}\to\cev{\V'}$ represent the same shape morphism.
Thus we obtain a map $\F_{XY}\:[X,Y]_\Fish\to[X,Y]_\Sh$.
By the general case of the preceding discussion it sends the fine shape morphism 
represented by a map $f\:X\to Y$ to the shape morphism represented by $f$.
Thus the forgetful map $[X,Y]\to [X,Y]_\Sh$ factors through $\F_{XY}$.
Also we get that $\F_{XX}([\id_X])=[\id_X]$.
It is straightforward to verify that for fine shape morphisms $X\xr{f}Y\xr{g}Z$ we have
$\F_{YZ}(g)\F_{XY}(f)=\F_{XZ}(gf)$.
\end{proof}

Theorem \ref{shape}(a) immediately yields

\begin{corollary} Fine shape equivalence implies shape equivalence.
\end{corollary}

\subsection{Strong shape}

We refer to \cite{Mard} for a description of the strong shape category for general metrizable spaces.
This description is the one that we will follow in the proof of the next theorem.
The set of strong shape morphisms between metrizable spaces $X$ and $Y$ is denoted by $[X,Y]_\Ssh$.

\begin{theorem}\label{strongshape}
(a) There is a functor $\F^\Fish_\Ssh$ from the fine shape category to the strong shape category 
which sends every object to itself.

(b) The forgetful functor from the homotopy category of metrizable spaces to the strong shape category 
is the composition of $\F^\Ho_\Fish$ and $\F^\Fish_\Ssh$.
\end{theorem}

Let us note that Theorem \ref{shape} is a consequence of Theorem \ref{strongshape}.

\begin{proof}
We will use the notation introduced in the proof of Theorem \ref{shape}.
According to \cite{Mard}, first of all we need to come up with ANR resolutions of
$X$ and $Y$ indexed by cofinite directed posets (a poset $P$ is called {\it cofinite} 
if for each $\lambda\in P$ there exist only finitely many $q\in P$ such that $q\le p$).%
\footnote{The logic behind this requirement, which might be not obvious from \cite{Mard},
is that in order to vastly simplify the higher coherence conditions (which distinguish 
strong shape from shape), one wants to have the maps between the indexing sets to be 
monotone.
In our case they are already monotone; so it might be possible to avoid this step by
using a different version of the strong shape category (but the author is not aware 
of any appropriate candidates in the literature).}
To this end let $\V^+$ consist of all finite subsets $S$ of $\V$ that have a minimal 
element (with respect to the order by inclusion), denoted $\min S$.
$\V^+$ is ordered by inclusion and is easily seen to be cofinite and directed.
Let $\cev\V^+$ be the inverse system indexed by $\V^+$ and consisting of the sets 
$V^+_S\bydef V_{\min S}$, with bonding maps $V^+_T\to V^+_S$, $S\subset T$, given by 
the inclusions $V_{\min T}\subset V_{\min S}$.
Then $\cev\V^+$ is an ANR resolution of $Y$ (see \cite{Mard}*{Lemma 6.31}).
Let $\cev\U^+=(U^+_S,\subset;\U^+)$ be the similarly defined ANR resolution of $X$.

Let $G\:M\but X\to N\but Y$ be an approaching map.
Given a finite subset $S\subset\V$, let $U^\star_S=X\cup\bigcap_{V\in S} G^{-1}(V\but Y)$.
Let $G^+\:\V^+\to\U^+$ be defined by $G^+(S)=\{U^\star_T\mid T\subset S,\, T\ne\emptyset\}$; 
note that $G^+(S)$ does have a minimal element, namely, $U^\star_S$.
Clearly $G^+$ is monotone.
Given an $S\in\V^+$, let $V\bydef V^+_S=V_{\min S}$ and $U\bydef U^+_{G^+(S)}=U^\star_S$.
Then we have the composition $g^+_S\:U\xr{\phi_1^U}U\but X\xr{G|}V\but Y\subset V$.
Similarly to the proof of Theorem \ref{shape}, $\G^0\bydef \big(g^+_S)_{S\in\V^+}$ is 
an $\inv\Ho$-morphism between $\cev\U^+$ and $\cev\V^+$.
(Using this $\G^0$ in place of $\G$ we can now reprove Theorem \ref{shape} by repeating 
the previous constructions.)

Let $\Delta(\V^+)$ be the order complex of the poset $\V^+$; thus an element 
$\sigma\in\Delta(\V^+)$ is a chain of subsets $S_0\subset\dots\subset S_n$ of $\V$,
where each $S_i$ belongs to $\V^+$, i.e.\ has a minimal element.
Let $V_i\bydef V^+_{S_i}=V_{\min S_i}$ and $U_i\bydef U^+_{G^+(S_i)}=U^\star_{S_i}$.
Thus $V_0\subset\dots\subset V_n$ and $U_0\subset\dots\subset U_n$.
As before we have the composition 
$g^+_{S_i}\:U_i\xr{\phi_1^{U_i}|}U_i\but X\xr{G|}V_i\but Y\subset V_i$.
We want to define a simplex of homotopies between appropriate restrictions of these maps.
To this end we first define a simplex of homotopies $\Phi_t^{U_0\dots U_n}\:M\to M\but X$
between the maps $\phi_1^{U_i}\:U_i\to U_i\but X$.
Namely, given a point $t=(t_0,\dots,t_n)\in\Delta^n\subset\R^{n+1}$ (in other words,
$t_0+\dots+t_n=1$ and each $t_i\ge 0$), we set 
$\Phi_t^{U_0\dots U_n}(x)=\phi_{t_n}^{U_n}\circ\dots\circ\phi_{t_0}^{U_0}(x)$.
Let us note that $\Phi_t^{U_0\dots U_n}|_{M\but U_n}=\id_{M\but U_n}$ for all $t\in I$.
Now let us consider the $\Delta^n$-homotopy 
$g^\sigma_t\:U_0\xr{\Phi_t^{U_0\dots U_n}|}U_n\but X\xr{G|}V_n\but Y\subset V_n$
between the maps $g^+_{S_i}$.
Clearly it is natural with respect to inclusions of faces of the form $\tau\subset\sigma$.
Hence $\G^+\bydef (g^\sigma_t)_{\sigma\in\Delta(\V^+)}$ is a coherent mapping $\cev\U^+\to\cev\V^+$
in the sense of \cite{Mard}.

Now let $H\:(M\but X)\x I\to N\but Y$ be an approaching homotopy between
approaching maps $G,\tilde G\:M\but X\to N\but Y$.
A coherent mapping $\tilde\G^+\:\cev\U^+\to\cev\V^+$ is defined similarly to $\G^+$.
Given a $\sigma=(S_0\subset\dots\subset S_n)\in\Delta(\V^+)$, the corresponding components 
of $\G^+$ and $\tilde\G^+$ are $g^\sigma_t\:U_0\to V_n$ (defined above) and 
$\tilde g^\sigma_t\:\tilde U_0\to V_n$ (defined similarly).
By Lemma \ref{approaching homotopy} each $H^{-1}(V\but Y)\cup X$ contains $W\x I$ 
for some $W\in\U$.
Hence each $X\x I\cup\bigcap_{V\in S} H^{-1}(V\but Y)$ contains $W^\star_S\x I$ for some 
$W^\star_S\in\U$.
Let $W_i=W^\star_{S_i}$.
Clearly, $W_0\subset\dots\subset W_n$ and each $W_i\subset U_i\cap\tilde U_i$.
Then the diagram of $\Delta^n$-homotopies
\[\begin{tikzcd}
W_0\ar[r,hook]\ar[d,hook]&U_0\ar[d,"g^\sigma_t"]\\
\tilde U_0\ar[r,"\tilde g^\sigma_t"]&V_n
\end{tikzcd}\]
commutes up to $\Delta^n\x I$-homotopy, using the stacked combination $h^\sigma_{(t,s)}$ of 
the $\Delta^n\x I$-homotopies
\begin{align*}
&W_0\xr{\Phi_{st_1+(1-s)t_2}^{U_0\dots U_nW_0\dots W_n}|}U_n\but X\xr{G|\ }V_n\but Y\subset V_n,\\
&W_0\xr{\ \ \ \Phi_t^{W_0\dots W_n}|\ \ \ }W_n\but X\xr{H_s|}V_n\but Y\subset V_n,\\
&W_0\xr{\Phi_{st_1+(1-s)t_2}^{W_0\dots W_n\tilde U_0\dots\tilde U_n}|}\tilde U_n\but X\xr{\tilde G|\ }V_n\but Y\subset V_n,
\end{align*}
where $H_s(x)=H(x,s)$ and $t_1$ and $t_2$ are the images of $t\in\Delta^n$ in the two copies
of $\Delta^n$ lying in the join $\Delta^n*\Delta^n=\Delta^{2n+1}$.
Clearly this $h^\sigma_{(t,s)}$ is natural with respect to 
inclusions of faces of the form $\tau\subset\sigma$.
Thus $\G^+$ and $\tilde\G^+$ are coherently homotopic.

Now let $f\:X\to Y$ be a continuous map. 
It extends to an $F\:M\to N$ such that $F^{-1}(Y)=X$.
Let $\F=(f^\sigma_t)_{\sigma\in\Delta(\V^+)}$ be the coherent mapping
$\cev\U^+\to\cev\V^+$ given by the preceding construction applied to
the restriction $M\but X\to N\but Y$ of $F$.
Given a subset $S\subset\V$, let $U^\star_S=\bigcap_{V\in S} F^{-1}(V)$.
Given a $\sigma=(S_0\subset\dots\subset S_n)\in\Delta(\V^+)$, let 
$V_i\bydef V^+_{S_i}=V_{\min S_i}$ and $U_i\bydef U^\star_{S_i}$.
Then the diagram
\[\begin{tikzcd}
X\ar[r,"f"]\ar[d,hook]&Y\ar[d,hook]\\
U_0\ar[r,"f^\sigma_t"]&V_n
\end{tikzcd}\]
commutes up to $\Delta^n\x I$-homotopy, using the $\Delta_n\x I$-homotopy 
\[\xi^\sigma_{(t_1,\dots,t_n,s)}\:U_0\xr{\phi_{st_n}^{U_n}\circ\dots\circ\phi_{st_0}^{U_0}|}
U_n\xr{F|}V_n.\]
Clearly this homotopy is natural with respect to inclusions of faces of the form $\tau\subset\sigma$.
Hence $\F$ is a {\it coherent expansion} of $f$, in the sense that the following diagram of 
coherent maps (where $X$ and $Y$ are understood as inverse systems indexed by the singleton, 
and the vertical arrows are the inverse cones of the inclusion maps)
\[\begin{CD}
X@>f>>Y\\
@VVV@VVV\\
\cev\U^+@>\F>>\cev\V^+,
\end{CD}\]
commutes up to coherent homotopy.

The remainder of the proof is similar to that of Theorem \ref{shape}.
\end{proof}

Theorem \ref{strongshape}(a) immediately yields

\begin{corollary} Fine shape equivalence implies strong shape equivalence.
\end{corollary}

\subsection{Sequential strong shape}

In view of Theorem \ref{fish-sash} one could expect that fine shape
coincides with strong shape for coronated ANRs (this would indeed be a dual theorem).
But this turns out to be false.

\begin{example} \label{comb-example}
Let $X\subset\R^2$ be the comb-and-flea set
$\{\frac1n\mid n\in\N\}\x[0,1]\cup (0,1]\x\{1\}\cup\{(0,0)\}$.
It is not hard to see that $X$ is a coronated ANR \cite{M-V}.
However, the forgetful map $[pt,X]_\Fish\to[pt,X]_\Ssh$ is not surjective.

Indeed, there exist strong shape morphisms $pt\to X$ (in fact, uncountably 
many of them) which do not factor through strong shape morphisms of the form $pt\to K$, where 
$K\subset X$ is compact \cite{M00}*{Example \ref{book:ssh-ash}}.
On the other hand, every fine shape morphism $pt\to X$ obviously factors through a 
fine shape morphism of the form $pt\to K$, where $K\subset X$ is compact.
\end{example}

\begin{remark} \label{non-example}
By the short exact sequence of \cite{M-V} it is not possible to construct such 
an example using Steenrod--Sitnikov homology or more generally any additive homology theory 
satisfying the map excision axiom.
(By Theorem \ref{fs-me} every fine shape invariant homology theory satisfies the map excision axiom.)
So the difference between fine shape and strong shape for coronated ANRs appears to be
an unstable phenomenon.
It is natural to conjecture that stable fine shape (which has yet to be defined) coincides
with stable strong shape (see \cite{Mard} and references there) for coronated ANRs.
\end{remark}

\subsection*{Acknowledgements}

I'm grateful to F. Ancel and V. Zemlyanoy for stimulating discussions and useful remarks.

\subsection*{Disclaimer}

I oppose all wars, including those wars that are initiated by governments at the time when 
they directly or indirectly support my research. The latter type of wars include all wars 
waged by the Russian state in the last 25 years (in Chechnya, Georgia, Syria and Ukraine) 
as well as the USA-led invasions of Afghanistan and Iraq.

\end{document}